\newtheorem{thmx}{Theorem}
\newaliascnt{corx}{thmx}
\newtheorem{corx}[corx]{Corollary}
\newaliascnt{lemma}{theorem}
\newtheorem{lemma}[lemma]{Lemma}
\newaliascnt{proposition}{theorem}
\newtheorem{proposition}[proposition]{Proposition}
\newaliascnt{corollary}{theorem}
\newaliascnt{conjecture}{theorem}
\newaliascnt{example}{theorem}
\def\tagform@#1{\maketag@@@{\ignorespaces#1\unskip\@@italiccorr}}
\let\orgtheequation\theequation
\def\theequation{(\orgtheequation)}
\def\equationautorefname~{}
\newcommand{\arxiv}[1]{%
 \href{http://front.math.ucdavis.edu/#1}{ArXiv:#1}}
\newcommand{\B}{{\mathbb B}}
\newcommand{\D}{{\mathbb D}}
\newcommand{\e}{\varepsilon}
\newcommand{\R}{{\mathbb R}}
\newcommand{\Rn}{{\mathbb R}^n}
\newcommand{\bo}{{\rm O}}
\newcommand{\sn}{\mathbb{S}^{n-1}}
\newcommand{\ds}{\displaystyle}
\newcommand{\fr}[2]{\frac{\ds #1}{\ds #2}}
\newcommand\soutb{\bgroup\markoverwith{\textcolor{blue}{\rule[.5ex]{2pt}{1pt}}}\ULon}
\newcommand{\Krejcirik}{{Krej\v{c}i\v{r}\'{\i}k}}
\begin{document}

\title{From Neumann to Steklov and beyond, via Robin: the Weinberger way}
%\title{From Neumann to Steklov via Robin: the Weinberger way}
%\title{From Neumann via Robin to Steklov: the Weinberger argument}
%\title{Neumann through Robin to Steklov: the Weinberger argument}
%\title{Neumann to Robin to Steklov: the Weinberger argument}
%\title{A journey with Weinberger from Neumann to Robin to Steklov}
%\title{From Neumann to Steklov via Robin: a journey with Weinberger}
%
\author[]{Pedro Freitas and Richard S. Laugesen}
\address{Departamento de Matem\'atica, Instituto Superior T\'ecnico, Universidade de Lisboa, Av. Rovisco Pais 1,
P-1049-001 Lisboa, Portugal {\rm and}
Grupo de F\'isica M\'atematica, Faculdade de Ci\^encias, Universidade de Lisboa,
Campo Grande, Edif\'icio C6, P-1749-016 Lisboa, Portugal}
\email{psfreitas\@@fc.ul.pt}
\address{Department of Mathematics, University of Illinois, Urbana,
IL 61801, U.S.A.}
\email{Laugesen\@@illinois.edu}
\date{\today}

\keywords{Robin, Neumann, Steklov, vibrating membrane, absorbing boundary condition}
\subjclass[2010]{\text{Primary 35P15. Secondary 33C10}}

\begin{abstract}
The second eigenvalue of the Robin Laplacian is shown to be maximal for the ball among domains of fixed volume, for negative values of the Robin parameter $\alpha$ in the regime
connecting the first nontrivial Neumann and Steklov eigenvalues, and even somewhat beyond the Steklov regime. The result is close to optimal, since the ball is not maximal when $\alpha$ is sufficiently large negative, and the problem admits no maximiser when $\alpha$ is positive.
\end{abstract}

\maketitle

\begin{center}
\emph{In memory of Hans Weinberger, and the inspiration he provided.}
\end{center}

\medskip

\section{\bf Introduction and results\label{intro}}

The Robin eigenvalue problem for the Laplace operator on a bounded domain $\Omega$ is
\begin{equation}\label{robinproblem}
\begin{split}
- \Delta u & = \lambda u \ \quad \text{in $\Omega$,} \\
\frac{\partial u}{\partial\nu} + \alpha u & = 0 \qquad \text{on $\partial \Omega$,} 
\end{split}
\end{equation}
where $\alpha$ is a real parameter. The eigenvalues, denoted $\lambda_{k}(\Omega;\alpha)$ for $k=1,2,\dots$, are increasing and continuous as functions of the boundary parameter $\alpha$, and for each fixed $\alpha$ satisfy
\[
\lambda_1(\Omega;\alpha) < \lambda_2(\Omega;\alpha) \leq \lambda_3(\Omega;\alpha) \leq \dots \to \infty . 
\]
This Robin problem models wave motion with an absorbing or radiating boundary ($\alpha<0$ or $\alpha>0$). The Robin spectrum connects the Neumann ($\alpha=0$), Dirichlet ($\alpha\to\infty$) and Steklov ($\lambda=0$) eigenvalue problems, and thus generates a global picture of the spectrum \cite{BFK17}.
In this paper we maximize the second Robin eigenvalue:
\begin{thmx}[$\lambda_2$ is maximal for the ball] \label{mainthm}
If $\Omega$ is a bounded Lipschitz domain in $\Rn, n \geq 2$, and $B$ is a ball of the same volume as $\Omega$, then
\[
\lambda_2(\Omega;\alpha) \leq \lambda_2(B;\alpha) , \qquad \alpha \in \Big[-\frac{n+1}{n}R^{-1},0 \Big] ,
\]
where $R$ is the radius of $B$. Equality holds if and only if $\Omega$ is a ball.
\end{thmx}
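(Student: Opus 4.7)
The plan is to follow Weinberger's classical recipe, with the second Neumann eigenfunction of the ball replaced by the second Robin eigenfunction, and with the boundary term in the Rayleigh quotient absorbed into a radial volume integral via the divergence theorem. Denote $\lambda^\ast := \lambda_2(B;\alpha)$ and let $G \colon [0,R] \to \R$ be the radial profile of the eigenspace basis $G(|x|)\, x_i/|x|$, $i=1,\ldots,n$, so that $G$ solves
\[
-G''(r) - \frac{n-1}{r} G'(r) + \frac{n-1}{r^2} G(r) = \lambda^\ast G(r), \qquad G(0) = 0, \quad G'(R) + \alpha G(R) = 0 .
\]
Extend $G$ to all of $[0,\infty)$ by a rule to be fixed below. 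The trial functions are $v_i(x) := G(|x-p|)(x_i-p_i)/|x-p|$, after translating $\Omega$ by a point $p$ to be determined. A standard Brouwer-type degree argument applied to the continuous map $p \mapsto \bigl(\int_\Omega v_i\, u_1 \, dx\bigr)_{i=1}^n$, where $u_1 > 0$ is the first Robin eigenfunction on $\Omega$, produces a $p$ for which each $v_i$ is $L^2(\Omega)$-orthogonal to $u_1$.

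The variational characterisation of $\lambda_2$ as the minimum of the Robin Rayleigh quotient over $v \perp u_1$ then gives $\lambda_2(\Omega;\alpha) \int_\Omega v_i^2 \le \int_\Omega |\nabla v_i|^2 + \alpha \int_{\partial\Omega} v_i^2$ for each $i$. Summing and using the elementary identities $\sum_i v_i^2 = G(r)^2$ and $\sum_i |\nabla v_i|^2 = G'(r)^2 + (n-1) G(r)^2/r^2$ reduces $\lambda_2(\Omega;\alpha) \le \lambda^\ast$ to
\[
\int_\Omega \Bigl[ G'(r)^2 + (n-1) \tfrac{G(r)^2}{r^2} - \lambda^\ast G(r)^2 \Bigr]\, dx + \alpha \int_{\partial\Omega} G(r)^2 \, dS \;\le\; 0 .
\]
The boundary term is the new obstruction versus Szeg\H{o}--Weinberger. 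I would handle it using $\hat r \cdot \nu \le 1$ on $\partial \Omega$ together with the divergence identity $\int_{\partial\Omega} G^2 \hat r \cdot \nu \, dS = \int_\Omega [2 G G' + (n-1) G^2/r]\, dx$; multiplication by $\alpha \le 0$ reverses direction and yields
\[
\alpha \int_{\partial\Omega} G(r)^2 \, dS \;\le\; \alpha \int_\Omega \Bigl[ 2 G(r) G'(r) + (n-1) \tfrac{G(r)^2}{r} \Bigr]\, dx .
\]
The inequality therefore reduces to showing $\int_\Omega \tilde{f}(r)\, dx \le 0$, where
\[
\tilde{f}(r) := G'(r)^2 + 2\alpha G(r) G'(r) + (n-1) \tfrac{G(r)^2}{r^2} + \alpha(n-1) \tfrac{G(r)^2}{r} - \lambda^\ast G(r)^2 .
\]
On $B$ both of the preceding inequalities become equalities (since $\hat r \cdot \nu = 1$ on $\partial B$), so $\int_B \tilde{f} = 0$; a radial-rearrangement comparison using $|\Omega| = |B|$ then reduces the matter to the one-variable statement that $\tilde{f}$ is nonincreasing on $[0,\infty)$.

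This radial monotonicity is the heart of the proof and the main obstacle. Using the ODE to eliminate $G''$ in $\tilde{f}'(r)$ turns $\tilde{f}'$ into a quadratic form in $(G,G')$ with coefficients that are rational functions of $r$ and explicit polynomials in $\alpha$ and $\lambda^\ast$. Evaluating at $r = R$, where $G'(R) = -\alpha G(R)$ pins the ratio $G'/G$, should produce a scalar inequality whose equality case locates the sharp threshold $\alpha = -(n+1)/(nR)$. A careful choice of extension of $G$ past $R$ (linear continuation along the tangent at $R$ is a natural candidate) is then needed to propagate $\tilde{f}' \le 0$ onto $(R,\infty)$, and one must also verify $\tilde{f}' \le 0$ throughout $(0, R)$ using qualitative properties of $G$ such as sign and monotonicity. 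Finally, the equality case of the theorem follows by tracking strictness: the radial comparison is strict unless $\Omega$ equals the ball centered at the translated origin, and $\hat r \cdot \nu < 1$ on a set of positive surface measure otherwise.
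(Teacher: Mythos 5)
Your overall architecture coincides with the paper's: Weinberger-style trial functions $G(r)x_i/r$ built from the second Robin eigenfunction of the ball, a centre-of-mass translation, the estimate $\int_{\partial\Omega}G^2\,dS\ge\int_\Omega(2GG'+(n-1)G^2/r)\,dx$ from $\hat r\cdot\nu\le 1$ and the divergence theorem, and a radial mass-transplantation comparison reducing everything to the monotonicity of the integrand $\tilde f$. (Your unified treatment via $\tilde f=h-\lambda^\ast G^2$ is in fact the paper's argument for the regime $\alpha<-R^{-1}$, applied to the whole range; for $\alpha\in[-R^{-1},0]$ the paper instead keeps $G^2$ and $h$ separate and uses that $G^2$ is increasing, $h$ decreasing, and $\lambda_2(\Omega;\alpha)\ge0$ may be assumed. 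Your version is fine in principle.) The problem is that the one step you yourself call ``the heart of the proof and the main obstacle'' --- that $\tilde f$ is decreasing on $(0,\infty)$ --- is not proved, and the two guesses you make about how to prove it both point in the wrong direction.

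First, the extension of $G$ past $R$. Linear continuation along the tangent fails precisely in the regime that makes the theorem new: for $\alpha<-R^{-1}$ one has $\lambda^\ast<0$, so along a linear extension the term $-\lambda^\ast G^2$ grows quadratically and $\tilde f(r)\to+\infty$, destroying monotonicity (and the transplantation argument needs $\tilde f(r)\le\tilde f(R)$ for \emph{all} $r>R$, since $\Omega$ is arbitrary). The paper uses the exponential extension $G(r)=G(R)e^{-\alpha(r-R)}$, for which $G'=-\alpha G$ identically, so $G'^2+2\alpha GG'=-\alpha^2G^2$ and the behaviour at infinity is governed by the sign of $-(\alpha^2+\lambda^\ast)$. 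Second, and relatedly, the threshold $\alpha=-\frac{n+1}{n}R^{-1}$ does not come from evaluating $\tilde f'$ at $r=R$: it is exactly the condition $\lambda_2(\B;\alpha)+\alpha^2\ge0$, which enters both on $(R,\infty)$ as just described and on $(0,R)$, where the paper writes $\tilde f'$ as a quadratic form in $(G',G/r)$ with negative leading coefficients and shows its discriminant is a convex quadratic in $\lambda$ that is negative at both endpoints of $[-\alpha^2,0]$. Certifying $\lambda_2(\B;\alpha)\ge-\alpha^2$ for $\alpha\ge-\frac{n+1}{n}R^{-1}$ requires a quantitative lower bound, $\lambda_2(\B;\alpha)\ge-(\alpha+1)^2+(n+2)(\alpha+1)$ after scaling to $R=1$, which the paper extracts from Amos's inequalities for the ratio $I_{\nu+1}/I_\nu$; the ``qualitative properties of $G$'' you invoke (sign, monotonicity, and $rG'+\alpha G\ge0$) likewise rest on monotonicity lemmas for $rJ_\nu'/J_\nu$ and $rI_\nu'/I_\nu$. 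Without these ingredients the monotonicity claim, and hence the theorem, is not established.
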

The value $\alpha=-R^{-1}$ is significant in that it makes $\lambda_2$ vanish for the ball. Thus the theorem ensures $\Omega$ has at least two negative Robin eigenvalues whenever $\alpha < -R^{-1}$.

From maximality of the ball at the values $\alpha=0$ and $\alpha=-R^{-1}$ we recover maximality of the first nontrivial Neumann and Steklov eigenvalues:
\begin{corx}[Steklov $\sigma_1$ and Neumann $\mu_1$ are maximal for the ball] \label{brockweinberger}
If $\Omega$ is a bounded Lipschitz domain in $\Rn, n \geq 2$, and $B$ is a ball of the same volume as $\Omega$, then
\[
\sigma_1(\Omega) \leq \sigma_1(B) \qquad \text{and} \qquad \mu_1(\Omega) \leq \mu_1(B),
\]
with equality if and only if $\Omega$ is a ball.
\end{corx}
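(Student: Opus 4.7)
The plan is to specialize \autoref{mainthm} to the two distinguished endpoints $\alpha=0$ and $\alpha=-R^{-1}$ of the permitted interval, at each of which the Robin spectrum collapses to a classical one.

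For the Neumann inequality I would take $\alpha=0$. The Robin boundary condition then reduces to $\partial u/\partial\nu=0$, constants are admissible, and the ordering from \autoref{mainthm} gives $\lambda_1(\cdot;0)=0$ and $\lambda_2(\cdot;0)=\mu_1(\cdot)$. Hence $\lambda_2(\Omega;0)\leq \lambda_2(B;0)$ is exactly $\mu_1(\Omega)\leq \mu_1(B)$, and any equality propagates back through \autoref{mainthm} to force $\Omega$ to be a ball by its rigidity clause.

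For the Steklov inequality I would take $\alpha=-R^{-1}$. The bridge is that whenever $\lambda=0$ is a Robin eigenvalue, the eigenfunction is harmonic and satisfies $\partial u/\partial\nu = -\alpha\,u$, so it is a Steklov eigenfunction with Steklov eigenvalue $-\alpha$. A separation-of-variables computation on the ball with $u(x)=x_1$ shows $\sigma_1(B)=R^{-1}$, equivalently $\lambda_2(B;-R^{-1})=0$. Since $-R^{-1}\in[-\tfrac{n+1}{n}R^{-1},0]$, \autoref{mainthm} yields
\[
\lambda_2(\Omega;-R^{-1})\leq \lambda_2(B;-R^{-1})=0.
\]
On the other hand, a branch-counting argument along the monotone, continuous curves $\alpha\mapsto\lambda_k(\Omega;\alpha)$ identifies $\sigma_1(\Omega)$ as the unique positive value satisfying $\lambda_2(\Omega;-\sigma_1(\Omega))=0$: the lowest branch $\lambda_1$ has already crossed zero at $\alpha=0$, so the next zero-crossing encountered as $\alpha$ decreases occurs on the second branch, precisely at $\alpha=-\sigma_1(\Omega)$. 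Combining the two displayed facts and using strict monotonicity of $\lambda_2(\Omega;\cdot)$ gives $-R^{-1}\leq -\sigma_1(\Omega)$, i.e.\ $\sigma_1(\Omega)\leq R^{-1}=\sigma_1(B)$. The equality case follows exactly as in the Neumann argument.

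The only ingredient beyond \autoref{mainthm} itself is the Robin--Steklov correspondence pinning down $-\alpha=\sigma_1(\Omega)$ at the $\alpha$-value where the \emph{second} Robin branch crosses zero. This is the one step where care is required: one must track indices and rule out that some other branch intervenes. Once monotonicity and continuity of $\alpha\mapsto\lambda_k(\Omega;\alpha)$ are acknowledged, however, this is a routine counting argument and poses no real obstacle. In short, \autoref{brockweinberger} is a direct specialization of \autoref{mainthm}, with the interior point $\alpha=0$ producing the Neumann case and the left endpoint $\alpha=-R^{-1}$ producing the Steklov case.
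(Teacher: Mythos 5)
Your proposal is correct and takes essentially the same route as the paper: the Neumann case is the $\alpha=0$ specialization, and the Steklov case rests on identifying $\sigma_1(\Omega)$ with $-\widetilde{\alpha}$, where $\widetilde{\alpha}$ is the greatest zero of $\alpha\mapsto\lambda_2(\Omega;\alpha)$ in $[-R^{-1},0)$, before invoking \autoref{mainthm} at $\alpha=-R^{-1}$. The paper phrases your ``branch-counting'' step via the intermediate value theorem and the choice of that greatest zero-crossing, but the substance, including the equality analysis, is identical.
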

The inequalities for $\mu_1$ and $\sigma_1$ were first proved in the simply connected planar case by Szeg\H{o} \cite{S54} and Weinstock \cite{We54}, respectively, using complex analytic techniques. The results were generalized to arbitrary domains in $n$-dimensions by Weinberger \cite{W56} for $\mu_1$, and by Brock \cite{B01} for $\sigma_1$ (who further obtained maximality of the ball for the harmonic mean of $\sigma_1,\dots,\sigma_n$). In fact, Weinstock normalized the perimeter rather than area of the domain, and so his result on $\sigma_1$ is stronger than Brock's, in $2$ dimensions. Bucur \emph{et al.}\ \cite{BFNT17} recently strengthened the inequality on $\sigma_1$ to surface area normalization in all dimensions, for the class of convex domains. For history and recent developments on Robin, Steklov and Neumann prob{\-}lems, we recommend the open access book on spectral shape optimization edited by Henrot \cite{H17}.

\autoref{brockweinberger} makes explicit a relation in \autoref{mainthm} between the Neumann and Steklov eigenvalue inequalities. These eigenvalues had, until now, been regarded as representing different aspects of the spectral theory of the Laplace operator, probably because they  lie on different axes in the spectral plane: the Neumann eigenvalue is the $\lambda$-intercept of the curve $\alpha \mapsto \lambda_2(\Omega;\alpha)$, while the Steklov eigenvalue is its $\alpha$-intercept.

Our proof of \autoref{mainthm} is inspired by Weinberger \cite{W56}, making use of the Rayleigh quotient 
\begin{equation*}\label{robinrayleigh}
Q[u] = Q[u;\alpha] = 
\frac{\int_\Omega |\nabla u|^2 \, dx + \alpha \int_{\partial \Omega} u^2 \, dS}{\int_\Omega u^2 \, dx} , \qquad u \in H^1(\Omega) .
\end{equation*}
The domain has Lipschitz boundary, and so $H^1(\Omega)$ imbeds compactly into $L^2(\Omega)$. Hence the Robin spectrum is well defined and discrete, and given by the usual minimax variational principles in terms of the Rayleigh quotient. 

The difficulty in the Robin case, when compared to the Neumann case ($\alpha=0$), lies in handling the integral over the boundary in the numerator of the Rayleigh quotient. We start in \autoref{boundary} by estimating the boundary integral with an integral over the domain, which then enables us to apply centre of mass and transplantation arguments as in Weinberger's method. The first part of the paper is dedicated to these preliminaries, and to ascertaining the necessary monotonicity properties for the Robin eigenfunctions of the ball. 
 %---\autoref{boundaryintegralsec} to \autoref{preliminaries}. 
 The theorem and corollary are then proved in \autoref{higherdimproof1} for $-R^{-1} \leq \alpha \leq 0$, with the proof extended to $- \frac{n+1}{n} R^{-1} \leq \alpha < -R^{-1}$ in \autoref{higherdimproof-adapted}. 
 
\subsection*{Extremal domains and conjectures for Robin eigenvalues}
We start by discussing the broader context and literature in extremal spectral geometry for the Robin problem \eqref{robinproblem}. We are interested in the structure of extremal spectral domains under a fixed volume constraint, and in the 
connections to Steklov and Neumann eigenvalues. The nature of the extremal domain can depend in a critical way on the sign of the boundary parameter $\alpha$, which in this paper is assumed to be negative.

\subsubsection*{First eigenvalue} A Faber--Krahn type inequality holds for the first eigenvalue, for each positive $\alpha$, as was proved in two dimensions by Bossel \cite{B86} in 1986, and extended to the $n$-dimensional case by Daners \cite{D06} in 2006, with an alternative approach via the calculus of variations found more recently by Bucur and Giacomini \cite{BG10,BG15a}. 

For negative values of $\alpha$ it was conjectured by Bareket \cite{B77} in 1977 that the ball would now be the global maximiser (not minimiser) among domains of fixed volume. This conjecture appears natural not only because the ball is the extremal domain for the first eigenvalue for most other Laplacian eigenvalue problems, but also due to a perturbation analysis around the Neumann problem ($\alpha=0$). The first Robin eigenvalue curve passes through $(\alpha,\lambda)=(0,0)$ with $\alpha$-derivative equal to $|\partial \Omega|/|\Omega|$, as can be formally seen from the Rayleigh quotient, using that the first eigenfunction is constant when $\alpha=0$. (For more analysis see \cite[Theorem 2.1]{GS07}. Incidentally, that paper also connects  the first Robin eigenvalue to the Ginzburg--Landau theory of superconductivity.) This $\alpha$-derivative is minimal for the ball of the same volume, which leads one to think the ball should have largest first eigenvalue when $\alpha<0$ is small. 

Ferone, Nitsch and Trombetti \cite{FNT15} proved in 2015 that the ball is a local maximiser for the first eigenvalue, when $\alpha<0$, and in the same year Freitas and 
\Krejcirik\ \cite{FK15} showed the disk is a global maximiser among planar domains for each sufficiently small $\alpha<0$.
However, in the latter paper the authors also showed in all dimensions that the ball cannot remain a global maximiser for large (negative) values of the boundary parameter, thus disproving Bareket's conjecture in general. This last result relied on a study of the asymptotic behaviour of
eigenvalues of balls and annular shells as $\alpha \to -\infty$. 

\Krejcirik\  and the first author conjectured that maximisers of the first eigenvalue should still possess radial symmetry whenever $\alpha<0$, and that the global maximiser should switch from a ball to a shell at some critical value of $\alpha$. This conjecture was later supported by numerical evidence \cite[Section 5]{AFK17} showing for planar domains of unit area that an annulus whose radius depends monotonically on $\alpha$ (in a certain fashion) becomes the maximiser for $\alpha < \alpha_{2}^{*}\approx -7.2875$. In three dimensions the transition from the ball to a shell of unit volume is expected to occur at $\alpha_{3}^{*}\approx -6.3757$.\footnote{This value for $\alpha_3^*$ corrects a misprint in \cite{AFK17}.}

For the Bareket conjecture using perimeter normalization instead of area or volume, the disk is the maximiser among planar domains for all $\alpha<0$, by work of Antunes, Freitas and Krej\v{c}i\v{r}\'{\i}k \cite[Theorem 2]{AFK17}, while in higher dimensions the ball is the maximiser among convex domains by Bucur, Ferone, Nitsch and Trombetti \cite{BFNT18}. 

\subsubsection*{Second eigenvalue} The numerical results by Antunes, Freitas and \Krejcirik\ \cite{AFK17} suggest a number of other conjectures. One of these, concerning
the second eigenvalue $\lambda_{2}(\Omega;\alpha)$, was made explicit by Bucur, Kennedy and the first author as Open Problem 4.41 in \cite{BFK17}, and states that the second eigenvalue $\lambda_{2}(\Omega;\alpha)$ should be maximal for the 
ball on a range of values $(\alpha^{*},0)$ for some negative value of $\alpha^{*}$. This conjecture may be seen as a natural
continuation of the Szeg\H{o}--Weinberger maximisation property of the ball for the first nontrivial Neumann eigenvalue \cite{S54,W56}. On the other
hand, and as was pointed out in \cite[Proposition 4.42]{BFK17}, a similar effect to that described above for the first eigenvalue must occur --- the ball cannot remain the global maximiser for all $\alpha<0$. More precisely, and as the numerical results in \cite{AFK17} also
suggest, the value of $\alpha^{*}$ indicated above should correspond to the point where another domain, possibly an annular shell, takes over the role of global
maximiser. The value where the shell and the ball have the same eigenvalue is determined by a somewhat complicated equation involving the modified Bessel functions $I_{m}$ and $K_{m}$; see \cite{FK15}.

\autoref{mainthm} proves this conjecture for the second eigenvalue, on a natural range of $\alpha$ that includes those (negative) values of $\alpha$ for which the second
eigenvalue $\lambda_{2}(B;\alpha)$ of a ball with given volume remains positive. This corresponds to the interval between the Neumann problem
at $\alpha=0$ and the negative of the first nontrivial Steklov eigenvalue of the ball $B$ of radius $R$, which occurs at $\alpha = -1/R$.

\autoref{mainthm} fails when $\alpha>0$. This may be seen by considering a family of rectangles $\mathcal{R}_{L}$ of unit area
with side lengths $L$ and $1/L$. One uses separation of variables and the known bounds on the first eigenvalue of an interval \cite[Appendix A.1]{FK18} (with $a=1/L$ being the short side of the rectangle). These bounds give that for fixed positive $\alpha$,
\[
 \lambda_{1}(\mathcal{R}_{L};\alpha) = 2\alpha L + \bo(1) \qquad \text{as $L \to \infty$,}
\]
so that the first eigenvalue of the rectangle can be arbitrarily large, and hence the second eigenvalue can too. Thus the eigenvalues admit no maximiser, when $\alpha$ is positive.

How far \autoref{mainthm} can continue to hold for values of $\alpha$ below $-\frac{n+1}{n}R^{-1}$ remains to be seen. We expect the result will still
hold for a (bounded) interval of $\alpha$ values below that value. This conjecture is supported by the numerical simulations in \cite{AFK17}.
For domains with unit area ($R=1/\sqrt{\pi}$), the transition between the disk and an annulus having larger second eigenvalue is found in that paper to occur at
$\alpha \approx -6.4050$, while our \autoref{mainthm} is valid for $\alpha \in [-\frac{3}{2}\sqrt{\pi},0] \approx [-2.6589,0]$. In three dimensions ($R=(\frac{3}{4\pi})^{1/3}$
for unit volume), the corresponding transition now occurs at $\alpha \approx -5.5857$, while~\autoref{mainthm} is valid for $\alpha \in [-(4/3)^{4/3}\pi^{1/3},0] \approx [-2.1493,0]$. Just as for the first eigenvalue, these transitions between balls and annular shells are determined by solutions of equations involving the modified Bessel functions $I_{m}, K_{m}$. 

\subsubsection*{Third eigenvalue} The third Robin eigenvalue $\lambda_3(\Omega;\alpha)$  is maximal neither for the ball nor for the double ball of the same volume, when $\alpha<0$, according to numerical work by Antunes \emph{et al.}\ \cite[Figure~4]{AFK17}. That example is surprising, because the Neumann eigenvalue $\mu_2(\Omega)=\lambda_3(\Omega;0)$ is known to be maximal for the double ball, by work of Bucur and Henrot \cite{BH18}. The fact that such Neumann inequalities need not always extend to the Robin 
problem suggests that the validity of \autoref{mainthm} is not obvious \emph{a priori}.

\section{\bf Boundary integral}
\label{boundaryintegralsec}

We need to estimate the boundary integral with a domain integral in the numerator of the Rayleigh quotient. Recall $\Omega$ is a bounded Lipschitz domain. 
\begin{proposition} \label{boundary}
If $f$ is nonnegative and $C^1$-smooth then 
\[
\int_{\partial \Omega} f \, dS 
\geq \int_\Omega \left( \frac{\partial f}{\partial r} + \frac{n-1}{r} f \right) dx
%= \int_\Omega \frac{\partial\ }{\partial r} \left( f r^{n-1} \right) \, dr \, dS(\xi) 
,
\]
%where $r>0$ and $\xi \in \partial \B$ are the standard spherical coordinates; 
and equality holds if $\Omega$ is a ball centered at the origin. Hence 
\begin{equation}\label{boundaryint}
\int_{\partial \Omega} g(r)^2 \, dS \geq \int_\Omega \left( 2g(r)g^\prime(r) + \frac{n-1}{r} g(r)^2 \right) dx 
\end{equation}
whenever $g(r)$ is radial and $C^1$-smooth for $r \geq 0$; equality holds if $\Omega$ is a ball centered at the origin. 
\end{proposition}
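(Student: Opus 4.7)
The plan is to recognize the right-hand side as the integral of a divergence. Writing $\hat{r} = x/|x|$ for the radial unit vector field on $\R^n \setminus \{0\}$, and using the classical identity $\operatorname{div}(x/|x|) = (n-1)/|x|$, a direct calculation gives
\[
\operatorname{div}\bigl(f(x)\, \hat{r}\bigr) = \hat{r} \cdot \nabla f + f \operatorname{div} \hat{r} = \frac{\partial f}{\partial r} + \frac{n-1}{r} f ,
\]
so the domain integral appearing in the statement equals $\int_\Omega \operatorname{div}(f \hat{r})\, dx$.

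Next I would apply the divergence theorem on $\Omega \setminus \overline{B_\varepsilon(0)}$ (or directly on $\Omega$ when $0 \notin \overline{\Omega}$), on which $f\hat{r}$ is $C^1$ and $\Omega\setminus \overline{B_\varepsilon(0)}$ remains Lipschitz for small $\varepsilon>0$. This yields
\[
\int_{\Omega \setminus \overline{B_\varepsilon(0)}} \left( \frac{\partial f}{\partial r} + \frac{n-1}{r} f \right) dx = \int_{\partial \Omega} f\, \hat{r} \cdot \nu \, dS \;-\; \int_{\partial B_\varepsilon(0)} f \, dS ,
\]
the second term on the right coming from the inward orientation of $\partial B_\varepsilon$ as part of $\partial(\Omega \setminus \overline{B_\varepsilon})$. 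Because $n \geq 2$, that term is bounded by $\max_{\overline{\Omega}}\! f \cdot |\partial B_\varepsilon| = \bo(\varepsilon^{n-1})$ and vanishes as $\varepsilon \to 0$. The left-hand side converges by dominated convergence, since $\partial f/\partial r$ is bounded on $\overline{\Omega}$ and $(n-1)f/r$ is integrable thanks to $1/r \in L^1_{\mathrm{loc}}(\R^n)$ for $n \geq 2$. Thus
\[
\int_\Omega \left( \frac{\partial f}{\partial r} + \frac{n-1}{r} f \right) dx = \int_{\partial \Omega} f\, \hat{r} \cdot \nu \, dS .
\]

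The inequality now drops out of Cauchy--Schwarz: $\hat{r} \cdot \nu \leq |\hat{r}|\,|\nu| = 1$, and combined with $f \geq 0$ this gives $\int_{\partial \Omega} f\, \hat{r} \cdot \nu \, dS \leq \int_{\partial \Omega} f \, dS$. When $\Omega$ is a ball centered at the origin, $\nu = \hat{r}$ pointwise on $\partial \Omega$, so $\hat{r} \cdot \nu \equiv 1$ and equality holds throughout. The second assertion \eqref{boundaryint} then follows immediately by taking $f = g(r)^2$, since $\partial f/\partial r = 2 g(r) g'(r)$.

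The only real subtlety is the singularity of $\hat{r}$ at the origin when $0 \in \Omega$, which is dispatched by the excision argument above; it is essential here that $n \geq 2$, so that the spurious surface contribution over $\partial B_\varepsilon$ decays.
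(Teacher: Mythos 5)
Your proof is correct and is essentially the paper's own argument: applying the divergence theorem to $f\,x/|x| = f\,\nabla r$ is precisely the paper's Green's-theorem step, the bound $\hat{r}\cdot\nu \leq 1$ is the paper's observation that $r$ has slope at most $1$ in every direction, and the excision of a small ball about the origin is exactly how the paper handles the singularity (which it invokes for $f=g(r)^2$). No changes are needed.
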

\begin{proof}
The radial function $r=|x|$ has slope at most $1$ in each direction, and so 
\begin{align*}
\int_{\partial \Omega} f \, dS
& \geq \int_{\partial \Omega} f \frac{\partial r}{\partial\nu} \, dS && \text{using that $f \geq 0$} \\
& = \int_\Omega ( \nabla f \cdot \nabla r + f \Delta r ) \, dx && \text{by Green's theorem} \\
& = \int_\Omega \left( \frac{\partial f}{\partial r} + \frac{n-1}{r} f \right) dx .
%& = \int_\Omega \frac{\partial\ }{\partial r} \left( f r^{n-1} \right) r^{-n+1} \, dx .
\end{align*}
If $\Omega$ is a ball centered at the origin then $\partial r/\partial\nu=1$ at every boundary point, and so equality holds in the argument above. 

For the final claim of the proposition we want to take $f(x)=g(r)^2$, but this $f$ might not be differentiable at the origin.
So we apply the result on the modified domain $\Omega \setminus \B(\e)$, using that $\int_{\partial \B(\e)} g(r)^2 \, dS \to 0$ as $\e \to 0$ and that $g(r)g^\prime(r)$ and $g(r)^2/r$ are integrable around the origin. 
\end{proof}
The case $f(x)=|x|^2$ of the proposition was used by Brasco, De Philippis and Ruffini \cite[Theorem 7.41]{BDP17} in their quantitative version of Betta, Brock, Mercaldo and Posteraro's weighted isoperimetric inequality, which led them to a quantitative version of Brock's inequality on the first nontrivial Steklov eigenvalue \cite[Theorem 7.44]{BDP17}. Those authors also investigate more general radial weights.

\section{\bf Center of mass}
\label{centerofmass_sec}

A standard center of mass argument will be needed when constructing our trial functions. Let $\Omega$ be a bounded Lipschitz domain, suppose $g(r)$ is continuous  for $r \geq 0$, and define
\[
v_{i+1}(x) = g(r) \frac{x_i}{r} , \qquad i=1,\dots,n .
\]
\begin{proposition}\label{centerofmass}
If $\int_0^\infty g(r) \, dr = \infty$ and $v$ is a nonnegative integrable function with $\int_\Omega v \, dx > 0$, then after a suitable translation of the domain $\Omega$ and the function $v$, the following orthogonality conditions are satisfied:
\[
\int_\Omega v_{i+1} v \, dx = 0 , \qquad i=1,\dots,n .
\]
\end{proposition}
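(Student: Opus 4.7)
The plan is to realize the required translation vector $y_0 \in \R^n$ as a critical point of a scalar potential whose gradient encodes the desired orthogonality conditions. Define the antiderivative $G(r) = \int_0^r g(s)\, ds$ and the functional
\[
\Phi(y) = \int_\Omega G(|x-y|)\, v(x)\, dx, \qquad y \in \R^n.
\]
Since $g$ is continuous on $[0,\infty)$, $G$ is $C^1$, and at every point $x \neq y$ one has $\nabla_y G(|x-y|) = -g(|x-y|)(x-y)/|x-y|$. The integrand and its $y$-gradient are locally bounded in $y$ uniformly over $x \in \Omega$, so differentiation under the integral sign yields
\[
\frac{\partial \Phi}{\partial y_i}(y) = -\int_\Omega g(|x-y|)\, \frac{x_i - y_i}{|x-y|}\, v(x)\, dx.
\]
If $y_0$ is a critical point of $\Phi$, then translating the domain and the weight by $-y_0$ (i.e.\ replacing $\Omega$ by $\Omega - y_0$ and $v$ by $v(\,\cdot\, + y_0)$) converts the equation $\nabla \Phi(y_0) = 0$ into exactly $\int_\Omega v_{i+1}\, v\, dx = 0$ for $i=1,\dots,n$, which is the conclusion.

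To exhibit a critical point I would apply the direct method of the calculus of variations, by showing that $\Phi$ attains its global minimum. Continuity of $\Phi$ on $\R^n$ follows from dominated convergence together with local boundedness of $G$. For coercivity, let $y_k \in \R^n$ with $|y_k| \to \infty$; since $\Omega$ is bounded, $|x - y_k| \to \infty$ uniformly for $x \in \Omega$. The hypothesis $\int_0^\infty g\, dr = \infty$ means $G(r) \to +\infty$ as $r \to \infty$, and continuity of $G$ on $[0,\infty)$ yields a lower bound $G \geq -M$ for some $M \in \R$. Fatou's lemma applied to the nonnegative integrand $(G(|x - y_k|) + M)\, v(x)$ gives
\[
\liminf_{k \to \infty} \Phi(y_k) + M \int_\Omega v\, dx \;\geq\; \int_\Omega \liminf_{k \to \infty} \bigl( G(|x - y_k|) + M \bigr) v(x)\, dx \;=\; +\infty,
\]
using $\int_\Omega v\, dx > 0$. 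Hence $\Phi(y_k) \to \infty$, so $\Phi$ attains its infimum at some $y_0 \in \R^n$, and necessarily $\nabla \Phi(y_0) = 0$.

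The key ingredient is really just the hypothesis $\int_0^\infty g\, dr = \infty$, which drives the coercivity; without it, $G$ would stay bounded and the minimiser could escape to infinity. The only technical subtlety is that the unit vector $(x - y)/|x - y|$ is undefined on the diagonal $\{x = y\}$, but this is a single point for each fixed $y$ and is therefore negligible both for the integral and for differentiation under it. With $y_0$ in hand, the translated domain $\Omega - y_0$ and function $v(\,\cdot\, + y_0)$ satisfy the orthogonality conditions stated in the proposition.
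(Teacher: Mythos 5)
Your proof is correct and takes essentially the same route as the paper: the paper likewise minimises the Lyapunov functional $L(y)=\int_\Omega F(|y-x|)\,v(x)\,dx$ with $F(r)=\int_0^r g(\rho)\,d\rho$, deduces existence of a minimiser from continuity (dominated convergence) and coercivity (driven by $\int_0^\infty g\,dr=\infty$ and $\int_\Omega v\,dx>0$), and reads off the orthogonality conditions from the vanishing of the gradient at the minimum, after translating $\Omega$ and $v$. Your handling of the lower bound on $G$ via Fatou and of the singularity on the diagonal is, if anything, slightly more explicit than the paper's.
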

\begin{proof}
Weinberger \cite{W56} proved such a proposition by using Brouwer's fixed point theorem. We follow instead a more direct approach \cite[{\S}7.4.3]{H17} that identifies the desired translation as a minimum point of the Lyapunov function 
\[
L(y) = \int_\Omega F(|y-x|) v(x) \, dx , \qquad y \in \Rn ,
\]
where $F(r)=\int_0^r g(\rho) \, d\rho$. 

The Lyapunov function depends continuously on $y$ by dominated convergence, since $F$ is continuous and $\Omega$ is bounded. Further, $L(y) \to \infty$ as $|y| \to \infty$, because $F(r) \to \infty$ as $r \to \infty$ and the nonnegative function $v$ has positive integral. Hence $L$ achieves a minimum at some point $y$. The partial derivatives at the minimum point must vanish, and so 
\[
0 = \frac{\partial L}{\partial y_i}(y) =  \int_\Omega g(|y-x|) \frac{y_i-x_i}{|y-x|} v(x) \, dx 
\]
for each $i=1,\dots,n$. Changing variable with $x \mapsto x+y$ and writing $|x|=r$ gives
\[
\int_{\Omega-y} g(r) \frac{x_i}{r} v(x+y) \, dx = 0 .
\]
Hence the desired orthogonality holds for $v_{i+1}(x)$ on the translated domain $\Omega-y$, with respect to the translated function $v(\cdot + y)$. 
\end{proof}

\section{\bf Mass transplantation}
\label{mass_sec}

A mass transplantation argument due to Weinberger is used in the proofs. We include the argument for the reader's benefit, and to obtain the ``if and only if'' equality statement. 
\begin{proposition}[Mass transplantation]\label{transplantation}
Suppose $\Omega \subset \Rn$ is a bounded Lipschitz domain having the same volume as the unit ball $\B$. 

If $f(r)$ is decreasing for $r \geq 0$ and is integrable on $\B$, then 
\[
\int_\Omega f(r) \, dx \leq \int_\B f(r) \, dx ,
\] 
and if in addition $f(r)$ is strictly decreasing then equality holds if and only if $\Omega = \B$. 

If $f(r)$ is increasing and is integrable on $\B$, then the inequality reverses direction. 
\end{proposition}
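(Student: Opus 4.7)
The plan is to compare $\int_\Omega f(r)\,dx$ and $\int_\B f(r)\,dx$ by cancelling the common region $\Omega \cap \B$ and exploiting the threshold value $f(1)$ on the symmetric differences. Explicitly,
\[
\int_\Omega f(r)\,dx - \int_\B f(r)\,dx = \int_{\Omega \setminus \B} f(r)\,dx - \int_{\B \setminus \Omega} f(r)\,dx.
\]
Since $r \geq 1$ a.e.\ on $\Omega \setminus \B$ and $r \leq 1$ a.e.\ on $\B \setminus \Omega$, monotonicity of $f$ gives $f(r) \leq f(1)$ on the first set and $f(r) \geq f(1)$ on the second. Hence the right-hand side is at most $f(1)\bigl(|\Omega \setminus \B|-|\B \setminus \Omega|\bigr)$, which vanishes because $|\Omega|=|\B|$ forces the two symmetric-difference pieces to have equal Lebesgue measure.

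For the equality statement, suppose $f$ is strictly decreasing and $\Omega \neq \B$. Because the volumes agree, both $\Omega \setminus \B$ and $\B \setminus \Omega$ have positive Lebesgue measure (if one were null, so would the other be, and $\Omega$ would agree with $\B$ up to a null set). Strict monotonicity then upgrades the pointwise inequalities above to strict ones almost everywhere on the symmetric differences, producing a strict inequality in the display. Conversely, if $\Omega = \B$ the two integrals are obviously equal.

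The increasing case follows by applying the decreasing case to $-f$, or equivalently by reversing the two pointwise inequalities in the argument above. The only potential hitch is the trivial measure-theoretic bookkeeping around the sphere $\{r=1\}$, which has measure zero and so causes no loss in either the inequality or the strictness statement. No further obstacle is anticipated, as the result is essentially a bathtub-principle comparison once the common region is cancelled.
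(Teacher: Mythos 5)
Your main inequality argument is the same as the paper's, just written as a single difference rather than a chain of inequalities: cancel $\Omega\cap\B$, bound $f$ above by $f(1)$ on $\Omega\setminus\B$ and below by $f(1)$ on $\B\setminus\Omega$, and use that the two symmetric-difference pieces have equal measure because $|\Omega|=|\B|$. That part is correct and is essentially the paper's proof.

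The gap is in the equality case, specifically in the parenthetical ``if one were null, so would the other be, and $\Omega$ would agree with $\B$ up to a null set,'' together with the implicit final step that agreement up to a null set forces $\Omega=\B$. For general open sets that implication is false: $\Omega=\B\setminus K$ with $K$ a closed null set has the same volume as $\B$, agrees with $\B$ up to a null set, and gives equality of the integrals, yet $\Omega\neq\B$. What rules this out is the Lipschitz hypothesis, and this is the one place in the whole proposition where it is actually needed: when $\B\not\subset\Omega$ you must produce a set of \emph{positive measure} inside $\B\setminus\Omega$. If the offending point of $\B$ is exterior to $\overline{\Omega}$ this is immediate, but if it lies on $\partial\Omega$ you need the boundary to be locally a Lipschitz graph separating $\Omega$ from the complement of $\overline{\Omega}$, so that a full open neighborhood on the outside of $\Omega$ sits in $\B\setminus\overline{\Omega}$. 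The paper's proof spells this out explicitly; your remark that the only measure-theoretic bookkeeping concerns the sphere $\{r=1\}$ points at the wrong set --- the delicate boundary is $\partial\Omega$, not $\partial\B$. (The opposite containment, that $m(\Omega\setminus\B)=0$ forces $\Omega\subset\B$, does follow from openness of $\Omega$ alone.) Once this point is supplied, the rest of your strictness argument goes through.
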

\begin{proof}
Since $f$ is radially decreasing and $\Omega$ and $\B$ have the same measure, we find
\begin{align}
\int_\Omega f(r) \, dx
& = \int_{\Omega \cap \B} f(r) \, dx + \int_{\Omega \setminus \B} f(r) \, dx \notag \\
& \leq \int_{\Omega \cap \B} f(r) \, dx + m(\Omega \setminus \B) f(1) \label{massproof1} \\
& = \int_{\B \cap \Omega} f(r) \, dx + m(\B \setminus \Omega) f(1) \notag \\
& \leq \int_{\B \cap \Omega} f(r) \, dx + \int_{\B \setminus \Omega} f(r) \, dx \label{massproof2} \\
& = \int_\B f(r) \, dx < \infty , \notag
\end{align}
which proves the inequality in the proposition. To prove the equality statement, we will show the inequality in the proposition is strict when $\Omega \neq \B$, assuming $f(r)$ is strictly decreasing. 

The first possibility is that $\Omega \not\subset \B$, so that the open set $\Omega$ contains a point at radius $r \geq 1$ and hence contains a neighborhood outside the unit ball. Thus $m(\Omega \setminus \B)>0$, and since $f$ is strictly decreasing,  inequality \eqref{massproof1} is strict. 

The second possibility is that $\B \not\subset  \Omega$. Then some point of the unit ball lies in the complement of $\Omega$, and near that point lies a neighborhood in $\B \setminus \overline{\Omega}$ (since the boundary of $\Omega$ is locally a Lipschitz graph that separates $\Omega$ from the complement of $\overline{\Omega}$). Thus $m(\B \setminus \Omega)>0$, and since $f$ is strictly decreasing, inequality \eqref{massproof2} is strict. 

If $f$ is increasing then apply the proposition to $-f$ to get the reverse inequality. 
\end{proof}

\section{\bf The Robin spectrum of the ball}
\label{preliminaries}

Consider the Robin eigenvalue problem \eqref{robinproblem} on the unit ball. In spherical coordinates $(r,\theta)\in\R\times\sn$ we may separate variables
in the form 
\[
u(r,\theta)=g(r)T(\theta)
\] 
to obtain that the angular part $T$ satisfies
\[
 \Delta_{\sn}T(\theta) + \kappa(\kappa+n-2) T(\theta) = 0
\]
where $\kappa \geq 0$ is an integer. 
%The first of these equations has eigenvalues with multiplicities given by
%\[
% \dbinom{n+\kappa-1}{n-1} - \dbinom{n+\kappa-3}{n-1}.
%\]
When $\kappa=0$ (giving a constant function $T$) the 
eigenfunctions on the ball are purely radial. For positive values of $\kappa$ the  angular function $T$ is a spherical harmonic, and the eigenvalues have multiplicity greater than $1$.

The radial part $g$ satisfies the Bessel-type equation
\begin{equation}\label{besseleq}
 g''(r) + \fr{n-1}{r}g'(r) + \left( \lambda - \frac{\kappa(\kappa+n-2)}{r^2} \right) g(r) = 0.
\end{equation}

In this section we determine the Robin spectrum of the ball, for every real $\alpha$. For the purposes of the rest of the paper, the key facts about the first and second eigenvalues and eigenfunctions are summarized in the next propositions, and shown graphically in \autoref{Robin_g_first}, \autoref{Robin_g_second} and \autoref{Robin12}. The propositions themselves follow from the remainder of the section. 
\begin{proposition}[First Robin eigenfunction of the ball]\label{basic1} The first eigenvalue is simple and the first eigenfunction is radial ($\kappa=0$), for each $\alpha$.

(i) If $\alpha< 0$ then $\lambda_1(\B;\alpha) < 0$ and the eigenfunction $g(r)$ is positive and radially strictly increasing, with   
\[
g(0) > 0 , \quad g^\prime(0)=0 , \qquad g^\prime(r) > 0 , \quad r \in (0,1) .
\]

(ii) If $\alpha=0$ then $\lambda_1(\B;0) = 0$, with constant eigenfunction $g(r) \equiv 1$.

(iii) If $\alpha > 0$ then $\lambda_1(\B;\alpha) > 0$ and the eigenfunction $g(r)$ is positive and radially strictly decreasing, with   
\[
g(0) > 0 , \quad g^\prime(0)=0 , \qquad g^\prime(r) < 0 , \quad r \in (0,1) .
\]
\end{proposition}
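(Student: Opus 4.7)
The plan is to first establish the structural facts (simplicity, positivity, radial symmetry) using the Rayleigh quotient and the strong maximum principle, then determine the sign of $\lambda_1(\B;\alpha)$ via explicit trial functions, and finally extract the monotonicity of $g$ from a first integral of \eqref{besseleq}.

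First, since the minimax principle recalled in the introduction yields a minimizer $u$ of $Q[\,\cdot\,;\alpha]$, I would replace $u$ by $|u| \in H^1(\B)$, which has the same Rayleigh quotient, to obtain a nonnegative first eigenfunction. Applying the strong maximum principle and Hopf lemma to the equation $-\Delta u = \lambda_1 u$ on the smooth ball makes this eigenfunction strictly positive on $\overline{\B}$. Simplicity then follows because a linearly independent second first-eigenfunction would yield a sign-changing linear combination, contradicting the positivity just established. Radial symmetry follows from simplicity: for each rotation $R$, the function $u \circ R$ is again a positive first eigenfunction with the same $L^2$ norm, so $u \circ R = u$, hence $u(x) = g(|x|)$ with $g > 0$ on $[0,1]$.

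By elliptic regularity $u$ is smooth on $\overline{\B}$, so $g$ solves \eqref{besseleq} with $\kappa = 0$ subject to $g'(0) = 0$ (required for smoothness at the origin) and the Robin condition $g'(1) + \alpha g(1) = 0$. When $\alpha = 0$ the function $g \equiv 1$ satisfies these conditions with $\lambda = 0$ and is positive, hence is the first eigenfunction, giving (ii). For $\alpha < 0$, using the constant trial function in the Rayleigh quotient gives
\[
\lambda_1(\B;\alpha) \le Q[1;\alpha] = \alpha \, |\partial \B|/|\B| < 0.
\]
For $\alpha > 0$, one has $Q[u;\alpha] \ge 0$, and if equality held at a minimizer then $\nabla u \equiv 0$ in $\B$ and $u \equiv 0$ on $\partial \B$ would force $u \equiv 0$, a contradiction; hence $\lambda_1 > 0$.

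Finally, I would rewrite the radial equation \eqref{besseleq} with $\kappa = 0$ in divergence form as $(r^{n-1} g'(r))' = -\lambda \, r^{n-1} g(r)$ and integrate from $0$ to $r$, using $g'(0) = 0$, to obtain
\[
r^{n-1} g'(r) = -\lambda \int_0^r s^{n-1} g(s) \, ds .
\]
Since $g > 0$ on $[0,1]$, the sign of $g'(r)$ on $(0,1)$ is opposite the sign of $\lambda_1$, yielding the stated monotonicity in (i) and (iii). I expect the main obstacle to be the initial positivity-and-simplicity step when $\alpha < 0$: the Robin boundary term then has indefinite sign, so one cannot directly invoke a positivity-preserving semigroup argument, and the combination of the Rayleigh-quotient reduction to a nonnegative minimizer with the strong maximum principle applied to $-\Delta u - \lambda_1 u = 0$ (regardless of the sign of $\lambda_1$) is the key technical point.
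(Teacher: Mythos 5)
Your proof is correct, but it follows a genuinely different route from the paper's. The paper derives \autoref{basic1} from the explicit separation-of-variables analysis in \autoref{preliminaries}: it writes the radial part as $r^{1-n/2}I_{n/2+\kappa-1}(\sqrt{-\lambda}\,r)$ or $r^{1-n/2}J_{n/2+\kappa-1}(\sqrt{\lambda}\,r)$, uses the monotonicity of $rI_\nu'/I_\nu$ and $rJ_\nu'/J_\nu$ in both $r$ and $\nu$ (\autoref{modifiedbesselmonot} and \autoref{besselmonot}, proved via infinite product expansions) to show the lowest eigenvalue branch has $\kappa=0$, and then reads off positivity, $g'(0)=0$, and the sign of $g'$ from power series and the identity $\big(r^{1-n/2}J_{n/2-1}(r)\big)'=-r^{1-n/2}J_{n/2}(r)$ together with $x_0(\alpha)<j_{n/2-1,1}$. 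You instead get simplicity, positivity and radial symmetry by the soft route (take $|u|$ as a minimizer, strong maximum principle, Hopf lemma, rotation invariance), determine the sign of $\lambda_1$ from trial functions, and obtain the monotonicity of $g$ from the first integral $r^{n-1}g'(r)=-\lambda\int_0^r s^{n-1}g(s)\,ds$. Your argument is more elementary and self-contained for this one proposition, and the first-integral step is arguably cleaner than the Bessel identities; the trade-off is that the paper's explicit branch analysis is needed anyway for \autoref{basic2}, for the eigenvalue relations \eqref{robinbesselcondition} and \eqref{modifiedrobinbesselcondition}, and for the quantitative bounds in \autoref{lambdaupperbound} and \autoref{lambdalowerbound}, so the authors get \autoref{basic1} essentially for free along the way. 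One point you correctly flag but should make explicit if writing this up: when $\lambda_1>0$ the zero-order coefficient in $-\Delta u-\lambda_1 u=0$ has the ``wrong'' sign for the textbook strong maximum principle, so you need the standard refinement for nonnegative solutions (for $u\ge 0$ one has $-\Delta u=\lambda_1 u\ge 0$, so $u$ is superharmonic and the classical minimum principle applies), and the Hopf lemma variant in which the boundary value is zero so the sign of the zero-order term is irrelevant.
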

The first eigenfunction is plotted for various values of $\alpha$ in \autoref{Robin_g_first}, for the unit disk in $2$-dimensions. 
\begin{figure}
\includegraphics[scale=0.6]{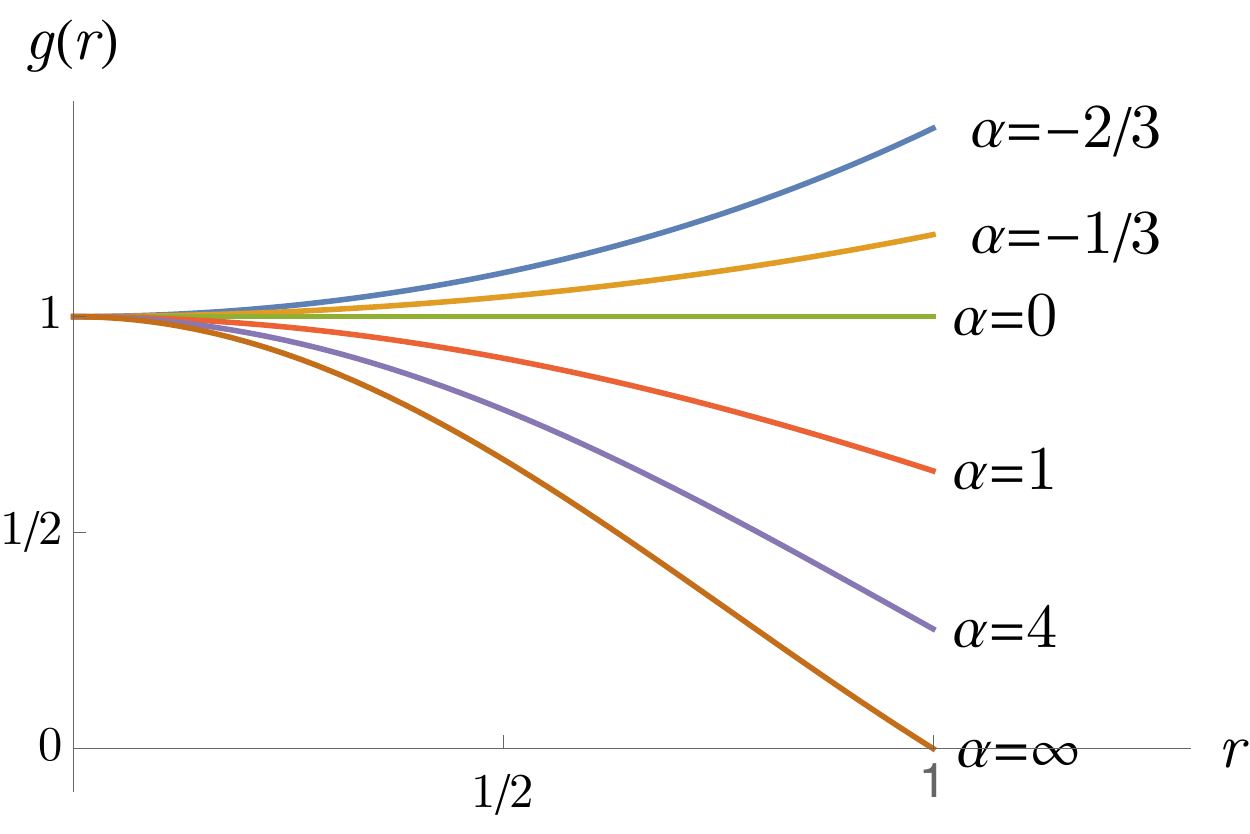}
\caption{\label{Robin_g_first}Plot of the first (radial) Robin eigenfunction of the unit disk, for various values of $\alpha$, normalized with $g(0)=1$. When $\alpha=0$ it is the constant Neumann eigenfunction $1$, and when $\alpha=\infty$ it is the Dirichlet eigenfunction $J_0(j_{0,1}r)$.}
\end{figure}
For the second eigenvalue, recall the spherical harmonics when $\kappa=1$ are the functions $x_1/r,\dots,x_n/r$ (multiplicity $n$). For example, in $2$-dimensions, they are $\cos \theta$ and $\sin \theta$. We call the case $\kappa=1$ ``simple angular dependence''.
\begin{proposition}[Second Robin eigenfunctions of the ball]\label{basic2} The second eigenfunctions have simple angular dependence, meaning they take the form $g(r)x_i/r$ for $i=1,\dots,n$. The radial part $g$ has $g(0)=0,g^\prime(0)>0,g(r)>0$ for $r \in (0,1)$, and when $\alpha \leq 0$ it is strictly increasing, with $g^\prime(r)>0$. 

(i) If $\alpha < -1$ then
\[
\lambda_2(\B;\alpha)=\dots=\lambda_{n+1}(\B;\alpha) < 0,
\]
and $r g^\prime(r) + \alpha g(r) < 0$ for $r \in (0,1)$.

(ii) If $\alpha= -1$ then $g(r)=r$ and 
\[
\lambda_2(\B;-1)=\dots=\lambda_{n+1}(\B;-1) = 0 .
\]

(iii) If $\alpha > -1$ then
\[
\lambda_2(\B;\alpha)=\dots=\lambda_{n+1}(\B;\alpha) > 0,
\]
and $r g^\prime(r) + \alpha g(r) > 0$ for $r \in (0,1)$.
\end{proposition}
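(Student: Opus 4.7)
My plan is to analyze the radial ODE \eqref{besseleq} with angular parameter $\kappa=1$, using the substitution $g(r)=rF(r)$ to reduce the study of the second eigenfunction to that of the \emph{first} eigenfunction of a unit ball in the higher effective dimension $n+2$, where \autoref{basic1} applies directly.

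The first step is to confirm that the second eigenvalue is realized in the sector of simple angular dependence, $\kappa=1$. Writing $u=g(r)T(\theta)$, the ground-state eigenvalue $\mu(\alpha,\kappa)$ of the reduced radial Sturm--Liouville problem (with regularity at $r=0$ and Robin condition at $r=1$) is strictly increasing in $\kappa$ because the centrifugal potential $\kappa(\kappa+n-2)/r^2$ is strictly increasing in $\kappa\geq 0$. Combined with $\mu(\alpha,0)=\lambda_1(B;\alpha)$ from \autoref{basic1}, I would verify by Bessel-function zero interlacing at $\alpha=0$ and continuity in $\alpha$ that $\mu(\alpha,1)<\mu_2(\alpha,0)$, giving $\lambda_2(B;\alpha)=\dots=\lambda_{n+1}(B;\alpha)$ with multiplicity $n$ and eigenfunctions of the form $g(r)x_i/r$. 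Smoothness of $u$ at the origin forces $g(0)=0$ with $g'(0)$ finite; normalize $g'(0)>0$.

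The key substitution is $g(r)=rF(r)$ in \eqref{besseleq} with $\kappa=1$, which yields
\[
F''(r)+\frac{n+1}{r}F'(r)+\lambda F(r)=0,
\]
the radial Laplacian eigenvalue equation in effective dimension $n+2$, while the Robin condition $g'(1)+\alpha g(1)=0$ becomes $F'(1)+(1+\alpha)F(1)=0$. Since $g$ is the ground state of the $\kappa=1$ radial problem, $F$ has no zeros on $[0,1]$ and is therefore the first Robin eigenfunction of the unit ball in $\R^{n+2}$ with parameter $1+\alpha$. Applying \autoref{basic1} to $F$ then delivers all three regimes at once: for $\alpha<-1$ (parameter $<0$), $\lambda<0$ with $F>0$, $F'(0)=0$, $F'>0$ on $(0,1)$; for $\alpha=-1$, $F$ is constant and $\lambda=0$, so $g(r)=r$ after normalization; for $\alpha>-1$, $\lambda>0$ with $F>0$, $F'(0)=0$, $F'<0$ on $(0,1)$. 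Translating back via $g=rF$ immediately gives $g(0)=0$, $g'(0)=F(0)>0$, $g>0$ on $(0,1)$, and the claimed signs of $\lambda_2$.

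The third step handles the sign of $rg'(r)+\alpha g(r)$ and the monotonicity of $g$ via the algebraic identity
\[
rg'(r)+\alpha g(r)=r\bigl[(1+\alpha)F(r)+rF'(r)\bigr].
\]
In case (i), $(1+\alpha)F<0$ and $rF'>0$ have opposite signs, so I would prove strict sign of the bracket on $(0,1)$ by noting that it vanishes at $r=1$ (the Robin condition on $F$), tends to $(1+\alpha)F(0)<0$ as $r\to 0$, and cannot cross zero in between by a Sturm-type argument applied to the ODE for $F$; case (iii) is symmetric. For the monotonicity $g'(r)>0$ on $(0,1)$ when $-1<\alpha\leq 0$, I would combine $g'(0)=F(0)>0$ and $g'(1)=-\alpha F(1)\geq 0$ with the linear second-order ODE satisfied by $g'$ (obtained by differentiating \eqref{besseleq}), invoking Sturm comparison to rule out any interior zero, which would otherwise contradict the ground-state status of $g$ in its sector. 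The chief obstacle I anticipate is precisely this last ODE analysis: establishing \emph{strict} sign and \emph{strict} monotonicity on $(0,1)$ despite endpoint vanishing and despite $F'<0$ in the regime $-1<\alpha\leq 0$, which neither \autoref{basic1} nor the algebraic substitution provides for free.
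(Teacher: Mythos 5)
Your dimension-shift substitution $g(r)=rF(r)$ is correct and genuinely different from the paper: it converts the $\kappa=1$ radial equation in $\Rn$ into the $\kappa=0$ equation in $\R^{n+2}$ with Robin parameter $1+\alpha$, and since a non-sign-changing Robin eigenfunction must be the first one, \autoref{basic1} (whose proof is dimension-independent) then delivers the signs of $\lambda_2(\B;\alpha)$ in cases (i)--(iii) in one stroke. The paper never leaves dimension $n$; it reads everything off the quotients $xJ_\nu^\prime(x)/J_\nu(x)$ and $yI_\nu^\prime(y)/I_\nu(y)$ via \eqref{robinbesselcondition} and \eqref{modifiedrobinbesselcondition}. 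Your reduction buys a cleaner treatment of the eigenvalue signs, at the cost of pushing all the remaining work onto the function $F$.

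Two steps of your plan are genuine gaps as written. First, identifying $\lambda_2$ with the $\kappa=1$ ground state requires $\mu(\alpha,1)<\mu_2(\alpha,0)$ for all $\alpha>-1$; ``interlacing at $\alpha=0$ plus continuity in $\alpha$'' is not enough unless you also rule out a crossing of the two branches at intermediate $\alpha$ --- the intervals $(0,j_{n/2,1})$ and $(j_{n/2-1,1},j_{n/2-1,2})$ overlap, so a crossing is not excluded a priori. This is exactly the delicate point the paper settles with the recurrence relations \cite[10.6.2]{DLMF}, split into the cases $-1<\alpha<1$ and $\alpha>0$; your version is repairable (at a crossing the logarithmic derivatives for $\nu=n/2-1$ and $\nu=n/2$ would coincide, contradicting strict monotonicity in $\nu$), but the argument must be made. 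Second, and more seriously, the sign of $rg^\prime+\alpha g$ and the strict monotonicity $g^\prime>0$ for $-1<\alpha\le0$ do not follow from anything you have established. Writing $rg^\prime+\alpha g=rF\left(\psi(r)+(1+\alpha)\right)$ with $\psi(r)=rF^\prime(r)/F(r)$, what you need is that $\psi$ is strictly monotone on $(0,1)$ (decreasing when $\lambda>0$, increasing when $\lambda<0$), so that $\psi$ stays on one side of its boundary value $\psi(1)=-(1+\alpha)$. The Riccati equation $r\psi^\prime=-\psi(\psi+n)-\lambda r^2$ has a right-hand side that is not sign-definite in the relevant range (e.g.\ for $\lambda>0$ and $\psi\in(-n,0)$ the two terms compete), so a pointwise no-crossing or naive Sturm argument does not close. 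The paper obtains this monotonicity globally from the infinite product expansions of $J_\nu$ and $I_\nu$ (\autoref{besselmonot}, \autoref{modifiedbesselmonot}); until you supply that ingredient or an equivalent, the assertions about $rg^\prime+\alpha g$ in (i) and (iii) and the claim $g^\prime>0$ remain unproved --- as you yourself flag.
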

The radial part $g(r)$ of the second eigenfunction is plotted for several values of $\alpha$ in \autoref{Robin_g_second}, for the unit disk in $2$-dimensions. 
\begin{figure}
\includegraphics[scale=0.6]{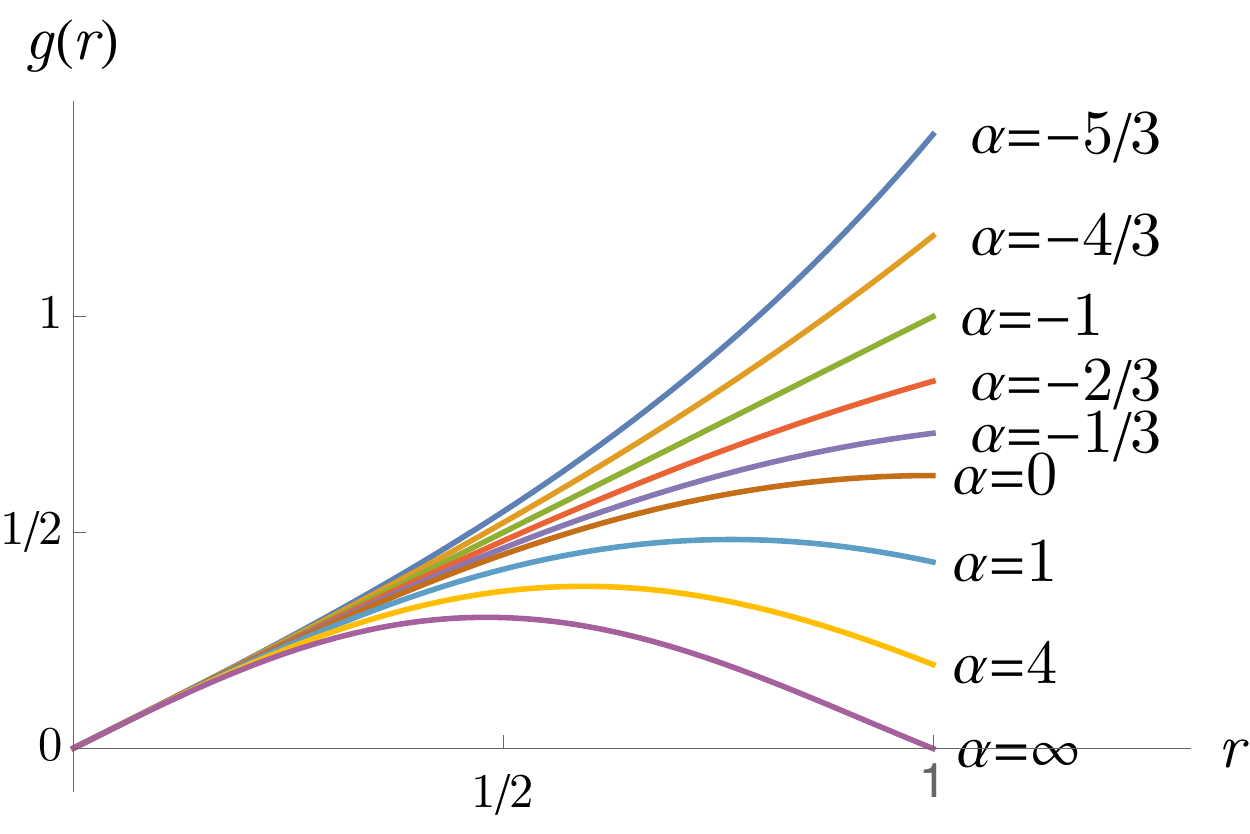}
\caption{\label{Robin_g_second}Plot of the radial part $g(r)$ of the second Robin eigenfunction of the unit disk, for various values of $\alpha$, normalized with $g^\prime(0)=1$. (When $\alpha=-1$ it is the straight line $g(r)=r$.) The eigenfunctions are $g(r) \cos \theta$ and $g(r) \sin \theta$; the eigenvalue has multiplicity $2$. This paper concentrates on $\alpha \leq 0$. }
\end{figure}
\begin{figure}
\includegraphics[scale=0.5]{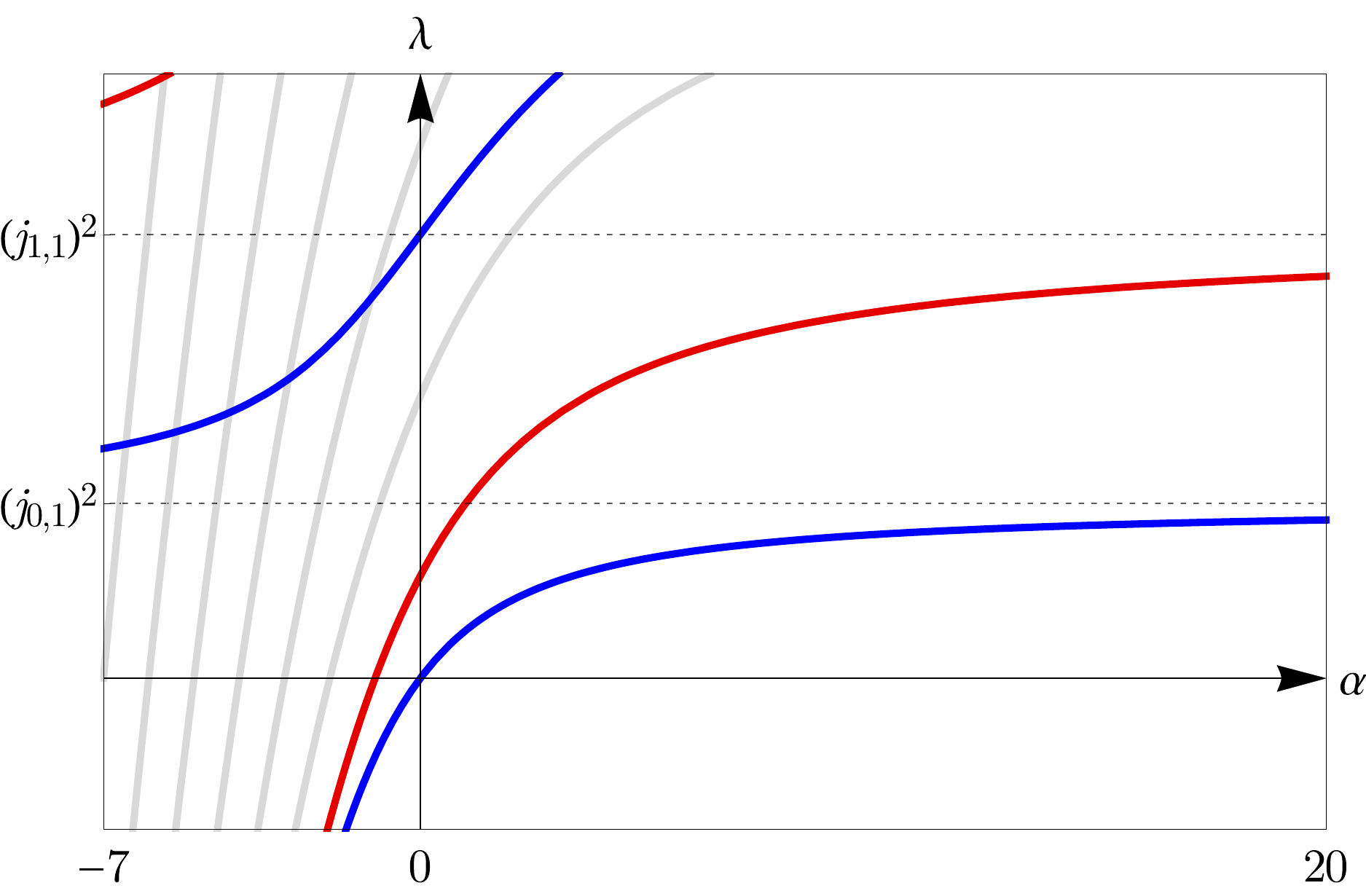}
\caption{\label{Robin12}Plot of the first two Robin eigenvalues of the unit disk: $\lambda_1(\D;\alpha)$ ($\kappa=0$), and
$\lambda_2(\D;\alpha)=\lambda_3(\D;\alpha)$ ($\kappa=1$), shown as the lowest curves in blue and red respectively. The next eigenvalues
for $\kappa=0,1$ are shown in the same colours, while eigencurves corresponding to higher values of $\kappa$ are shown in grey.
%The curves are shown for $\alpha \geq -7$, along with their horizontal asymptotes as $\alpha \to \infty$. 
To generate each curve one graphs $\alpha$ in terms of $\lambda$, by the relation $\alpha = -\sqrt{\lambda} G^\prime(\sqrt{\lambda})/G(\sqrt{\lambda})$, where $G=J_0$ for the first curve and $G=J_1$ for the second. (When $\lambda<0$, replace $\lambda$ by $|\lambda|$ and change the Bessel $J$-function to a Bessel $I$-function.)}
\end{figure}

We proceed now to analyze the spectrum, and establish the propositions. 

\subsection*{Zero eigenvalues}
Assume $\lambda=0$. Then \eqref{besseleq} simplifies to 
\[
 r^2 g''(r) + (n-1)rg'(r) - \kappa(\kappa+n-2) g(r) = 0 .
\]
This differential equation has solutions $r^\kappa$ and $r^{-(\kappa+n-2)}$, except that when $\kappa=0$ and $n=2$ the two solutions coincide, and the second solution should be replaced by $\log r$. We discard the second solution, in every case, since the eigenfunctions must have square integrable radial derivative, that is, $\int_0^1 g^\prime(r)^2 \, r^{n-1} dr < \infty$. Thus $g$ is given by the first solution:
\[
g(r) = r^\kappa , \qquad 0 \leq r \leq 1 .
\]
%Notice $\kappa \geq 1$ implies $g(0)=0$, while $\kappa \neq 1$ implies $g^\prime(0)=0$, and $\kappa = 1$ implies $g^\prime(0)>0$.
The Robin boundary condition $g^\prime(1)+\alpha g(1)=0$ requires $\kappa + \alpha = 0$, or $\alpha = -\kappa$. 

We conclude that zero eigenvalues occur at parameter values $\alpha=-\kappa$ for integers $\kappa \geq 0$, with corresponding eigenfunctions $u(r,\theta)=r^\kappa T(\theta)$ where $T$ is a spherical harmonic of degree $\kappa$. 

\subsection*{Negative eigenvalues}
Assume $\lambda<0$. Letting
\[
g(r) = G \big( \sqrt{-\lambda} \,r \big) 
\]
in \eqref{besseleq}, we find $G$ satisfies a differential equation that is independent of $\lambda$, namely
\begin{equation*}\label{modifiedbesseleq2}
 G''(r) + \fr{n-1}{r}G'(r) - \left( 1 + \frac{\kappa(\kappa+n-2)}{r^2} \right) G(r) = 0 .
\end{equation*}
The equation has solution 
\[
 G(r) = r^{1-n/2}I_{n/2+\kappa-1}(r)
\]
where $I_{\nu}$ is the modified Bessel function of the first kind. (We discard the modified Bessel functions $K_{n/2+\kappa-1}$ of the second kind, since we need eigenfunctions whose derivatives are square integrable.) Note $G(r) = (\text{const.}) r^\kappa + O(r^{\kappa+2})$ as $r \to 0$. Thus $\kappa \geq 1$ implies $G(0)=0$, while $\kappa \neq 1$ implies $G^\prime(0)=0$, and $\kappa = 1$ implies $G^\prime(0)>0$.

The Robin boundary condition says $g'(1) + \alpha g(1) = 0$, which is equivalent to $\sqrt{-\lambda} G^\prime(\sqrt{-\lambda})+\alpha G(\sqrt{-\lambda}) = 0$. To analyze this  condition, we logarithmically differentiate $G$ to obtain 
\[
r \frac{G^\prime(r)}{G(r)} = 1 - \frac{n}{2}  + r \frac{I_{n/2+\kappa-1}^\prime(r)}{I_{n/2+\kappa-1}(r)} . 
\]
The right side of this equation is strictly increasing, by \autoref{modifiedbesselmonot}. 

The Robin boundary condition in the last paragraph is
\begin{equation}\label{modifiedrobinbesselcondition}
y\fr{I_{n/2+\kappa-1}^\prime(y)}{I_{n/2+\kappa-1}(y)} = \frac{n}{2} - 1 -  \alpha ,
\end{equation}
where we have written $y=\sqrt{-\lambda}$. As $y$ increases from $0$ to $\infty$, the expression on the left strictly increases from $n/2+\kappa-1$ to $\infty$, by \autoref{modifiedbesselmonot}, and so \eqref{modifiedrobinbesselcondition} determines a unique solution  $y_\kappa(\alpha)>0$, when $\alpha<-\kappa$. Clearly $y_\kappa(\alpha)$ is a strictly decreasing function of $\alpha<-\kappa$, and so the eigenvalue $\lambda_\kappa(\alpha)=-y_\kappa(\alpha)^2$ strictly increases from $-\infty$ to $0$ as $\alpha$ increases from $-\infty$ to $-\kappa$. 

We will show
\begin{equation*} \label{modifiedmonotkappa}
y_\kappa(\alpha) > y_{\kappa+1}(\alpha) \qquad \text{whenever\ } \alpha < -(\kappa+1) ,
\end{equation*}
so that $\lambda_\kappa(\alpha) < \lambda_{\kappa+1}(\alpha)$, meaning the negative eigenvalue branches increase monotonically with respect to $\kappa$ wherever they are defined. Indeed, from \eqref{modifiedrobinbesselcondition} and the strictly increasing dependence with respect to $\nu$ in \autoref{modifiedbesselmonot} we find
\[
\frac{n}{2} - 1 -  \alpha = y_\kappa(\alpha) \fr{I_{n/2+\kappa-1}^\prime(y_\kappa(\alpha))}{I_{n/2+\kappa-1}(y_\kappa(\alpha))} < y_\kappa(\alpha) \fr{I_{n/2+\kappa}^\prime(y_\kappa(\alpha))}{I_{n/2+\kappa}(y_\kappa(\alpha))} ,
\]
so that $y_\kappa(\alpha)$ is larger than the root $y_{\kappa+1}(\alpha)$.

Since the negative eigenvalues increase in value with $\kappa$, we conclude that the lowest eigenvalue comes from the branch with $\kappa=0$, that is, $\lambda_1(\alpha) = -y_0(\alpha)^2$ when $\alpha<0$. The eigenfunction is $g(r) = G\big( \sqrt{-\lambda_1(\B;\alpha)} \, r \big)$ where $G(r)=r^{1-n/2}I_{n/2-1}(r)$. The power series for the modified Bessel function shows that $g(0)>0,g^\prime(0)=0$, and $g^\prime(r)>0$ for $r>0$.

The next lowest negative eigenvalue is associated with $\kappa=1$, that is, $\lambda_2(\alpha) = -y_1(\alpha)^2$ when $\alpha<-1$. The radial part of the eigenfunction is $g(r) = G\big( \sqrt{-\lambda_2(\B;\alpha)} \, r \big)$ where $G(r)=r^{1-n/2}I_{n/2}(r)$. The power series for the modified Bessel function shows $g(0)=0,g^\prime(0)>0$ and $g^\prime(r)>0$ for $r>0$. 

\subsection*{Positive eigenvalues}
Assume $\lambda>0$. Letting
\[
g(r) = G \big( \sqrt{\lambda} \,r \big) 
\]
in \eqref{besseleq}, we find again that $G$ satisfies a differential equation independent of $\lambda$, 
\begin{equation*}\label{besseleq2}
 G''(r) + \fr{n-1}{r}G'(r) + \left( 1 - \frac{\kappa(\kappa+n-2)}{r^2} \right) G(r) = 0 .
\end{equation*}
The solution is  
\[
 G(r) = r^{1-n/2}J_{n/2+\kappa-1}(r)
\]
where $J_{\nu}$ is the Bessel function of the first kind. (We discard the Bessel functions $Y_{n/2+\kappa-1}$ of the second kind, since we need eigenfunctions whose derivatives are square integrable.) Note that $G$ is called by some authors an ultraspherical Bessel function. It satisfies $G(r) = (\text{const.}) r^\kappa + O(r^{\kappa+2})$ as $r \to 0$. Thus $\kappa \geq 1$ implies $G(0)=0$, while $\kappa \neq 1$ implies $G^\prime(0)=0$, and $\kappa = 1$ implies $G^\prime(0)>0$.

The Robin boundary condition says $g'(1) + \alpha g(1) = 0$, which is equivalent to $\sqrt{\lambda} G^\prime(\sqrt{\lambda})+\alpha G(\sqrt{\lambda}) = 0$. We investigate by logarithmically differentiating $G$ to find 
\[
r \frac{G^\prime(r)}{G(r)} = 1 - \frac{n}{2}  + r \frac{J_{n/2+\kappa-1}^\prime(r)}{J_{n/2+\kappa-1}(r)} . 
\]
The right side of this equation is strictly decreasing, by \autoref{besselmonot}. 

The Robin boundary condition in the last paragraph is
\begin{equation}\label{robinbesselcondition}
x\fr{J_{n/2+\kappa-1}^\prime(x)}{J_{n/2+\kappa-1}(x)} = \frac{n}{2} - 1 -  \alpha ,
\end{equation}
where we have written $x=\sqrt{\lambda}$. The expression on the left behaves qualitatively like a negative tangent function for positive values of $x$, decreasing initially from $n/2+\kappa-1$ to $-\infty$, and then from $\infty$ to $-\infty$ between successive zeros of the denominator, as \autoref{besselmonot} shows. Each branch of the left side of \eqref{robinbesselcondition} determines $\alpha$ as a strictly increasing function of $x=\sqrt{\lambda}$. Taking the inverse function determines a branch of $\sqrt{\lambda}$ as a function of $\alpha$. 

For each fixed $\kappa$, the lowest branch of $\sqrt{\lambda}$ is defined for $\alpha> -\kappa$ and decreases to $0$ as $\alpha$ decreases to $-\kappa$, and increases to $j_{n/2+\kappa-1,1}$ as $\alpha \to \infty$. Each higher branch ($m \geq 1$) is defined for all $\alpha \in \R$ and decreases to $j_{n/2+\kappa-1,m}$ as $\alpha \to -\infty$, and increases to $j_{n/2+\kappa-1,m+1}$ as $\alpha \to \infty$. We will use these branches to study the positive Robin eigenvalues of the unit ball. 

Write $x_\kappa(\alpha)$ for the lowest solution branch of \eqref{robinbesselcondition}, when $\alpha>-\kappa$, so that $0<x_\kappa(\alpha)<j_{n/2+\kappa-1,1}$. We show
\begin{equation*} \label{monotkappa}
x_\kappa(\alpha) < x_{\kappa+1}(\alpha) \qquad \text{whenever\ } \alpha > -\kappa ,
\end{equation*}
so that the lowest eigenvalue branches increase monotonically with $\kappa$ wherever they are defined. We may suppose $x_{\kappa+1}(\alpha)<j_{n/2+\kappa-1,1}$, since otherwise there is nothing to prove. From \eqref{robinbesselcondition} and the strictly increasing dependence with respect to $\nu$ in \autoref{besselmonot} we find
\[
\frac{n}{2} - 1 -  \alpha
= x_\kappa(\alpha)\fr{J_{n/2+\kappa-1}^\prime(x_\kappa(\alpha))}{J_{n/2+\kappa-1}(x_\kappa(\alpha))} 
< x_\kappa(\alpha)\fr{J_{n/2+\kappa}^\prime(x_\kappa(\alpha))}{J_{n/2+\kappa}(x_\kappa(\alpha))} 
\]
which means that $x_\kappa(\alpha)$ is smaller than the root $x_{\kappa+1}(\alpha)$.

We conclude that the lowest eigenvalue comes from the branch with $\kappa=0$, that is, $\lambda_1(\alpha) = x_0(\alpha)^2$ when $\alpha>0$. The eigenfunction is $g(r) = G\big( \sqrt{\lambda_1(\B;\alpha)} \, r \big)$ where $G(r)=r^{1-n/2}J_{n/2-1}(r)$. The power series for the Bessel function gives $g(0)>0$ and $g^\prime(0)=0$. Also,  $G^\prime(r) = - r^{1-n/2}J_{n/2}(r)$ by \cite[10.6.6]{DLMF}, and since the construction above ensures $x_0(\alpha)<j_{n/2-1,1}$, we deduce $g^\prime(r)<0$ for $r \in (0,1)$. 

Next we show the second eigenvalue comes from the branch with $\kappa=1$. For this we must show 
\[
x_1(\alpha) < x_0^1(\alpha) \quad \text{whenever $\alpha > -1$,}
\]
where we write $x_0^1(\alpha)$ for the first higher branch with $\kappa=0$, that is, the branch with $m=1$ that is defined for all $\alpha \in \R$ and satisfies $j_{n/2-1,1} < x_0^1(\alpha) < j_{n/2-1,2}$. When $-1 < \alpha < 1$, we have $n \geq 2 > 1+\alpha$ and so
\[
\frac{n}{2} - 1 -  \alpha > - \frac{n}{2} = j_{n/2-1,1} \fr{J_{n/2}^\prime(j_{n/2-1,1})}{J_{n/2}(j_{n/2-1,1})}
\]
by the recurrence relation \cite[10.6.2]{DLMF}
\[
x\frac{J_{\nu}^\prime(x)}{J_{\nu}(x)} =  x \frac{J_{\nu-1}(x)}{J_{\nu}(x)} -\nu .
\]
It follows that $x_1(\alpha)<j_{n/2-1,1}$, which by definition is smaller than $x_0^1(\alpha)$. Hence $x_1(\alpha) < x_0^1(\alpha)$. 

Now suppose $\alpha > 0$. The quantity on the left of \eqref{robinbesselcondition} can be rewritten as
\[
x\fr{J_{n/2-1}^\prime(x)}{J_{n/2-1}(x)}  = - x \frac{J_{n/2}(x)}{J_{n/2-1}(x)} + \frac{n}{2} - 1
\] 
by another recurrence relation \cite[10.6.2]{DLMF}, and so this quantity can equal $n/2-1-\alpha$ if and only if 
\[
x \frac{J_{n/2}(x)}{J_{n/2-1}(x)} = \alpha .
\]
Thus the choice $x=x_0^1(\alpha)$ must make the left side positive, since $\alpha>0$. The denominator is negative, because $x_0^1(\alpha)$ lies between the first and second zeros of $J_{n/2-1}$. Thus the numerator must be negative at $x=x_0^1(\alpha)$, which means $x_0^1(\alpha)>j_{n/2,1}$, and that is larger than $x_1(\alpha) < j_{n/2,1}$ by construction. 

This completes the proof that the second eigenvalue comes from $\kappa=1$, that is, $\lambda_2(\alpha) = x_1(\alpha)^2$ when $\alpha>-1$. 

We have shown when $\alpha > -1$ that the second eigenvalue of the unit ball has $\kappa=1$, and its eigenfunction has radial part $g(r) = G\big( \sqrt{\lambda_2(\B;\alpha)} \, r \big)$ where $G(r)=r^{1-n/2}J_{n/2}(r)$. The square root of the eigenvalue is less than $j_{n/2,1}$. Hence by \autoref{besselmonot}, $rg^\prime(r)/g(r)$ is strictly decreasing on $r \in (0,1)$. It equals $-\alpha$ when $r=1$, by the Robin boundary condition. Hence $r g^\prime(r) + \alpha g(r) > 0$ when $0<r<1$. If $\alpha \in (-1,0]$ then it follows that $g^\prime(r)>0$ when $0<r<1$.

\section{\bf Explicit eigenvalue bounds for the ball}

The second eigenvalue $\lambda_2$ of the ball, which provides our upper bound in \autoref{mainthm}, may be computed numerically for each $\alpha$ from equation \eqref{robinbesselcondition}. Or one may use that formula to compute the inverse function, that is, to compute $\alpha$ in terms of $\lambda_2$. To complement those approaches, we obtain in this section accurate and explicit estimates for the second eigenvalue by means of inequalities on the quotient functions $J_{\nu+1}/J_{\nu}$ and $I_{\nu+1}/I_\nu$. 

First we consider $\alpha \in [-1,0]$. We will concentrate on an upper bound for the second eigenvalue, because that is more relevant to our work, but it is possible to obtain a lower bound in a similar fashion.
\begin{proposition} \label{lambdaupperbound}
 The second eigenvalue of the unit ball satisfies the estimate
 \[
 0 \leq \lambda_{2}(\B;\alpha) \leq \fr{1}{2} (n+2)(n+4) \left( \sqrt{1+ 4\fr{1+\alpha}{n+4}} - 1 \right) , \qquad \alpha\in[-1,0] ,
 \]
 with equality on both sides when $\alpha=-1$. 
\end{proposition}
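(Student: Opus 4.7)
The strategy is to convert the eigenvalue equation for $\lambda_2(\B;\alpha)$ into a single transcendental equation involving a ratio of Bessel functions, bound that ratio from below by an explicit polynomial in $\lambda$, and then solve the resulting quadratic. Put $\nu=n/2$ and $x=\sqrt{\lambda_2(\B;\alpha)}$. By \autoref{basic2} and the analysis following \eqref{robinbesselcondition} in \autoref{preliminaries}, one has $x\in(0,j_{n/2,1})$, and applying the recurrence $xJ_\nu'(x)=\nu J_\nu(x)-xJ_{\nu+1}(x)$ to \eqref{robinbesselcondition} with $\kappa=1$ reduces the Robin condition to
\[
 x\,\frac{J_{n/2+1}(x)}{J_{n/2}(x)}=1+\alpha.
\]

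To bound this quotient from below I would invoke the Mittag--Leffler expansion obtained by logarithmically differentiating the Hadamard product representation of $J_\nu$:
\[
 \frac{J_{\nu+1}(x)}{J_\nu(x)}=\sum_{k=1}^{\infty}\frac{2x}{j_{\nu,k}^{2}-x^{2}},\qquad 0<x<j_{\nu,1}.
\]
Each summand is positive on the relevant range, and expanding in a geometric series in $(x/j_{\nu,k})^{2}$ gives, by Tonelli,
\[
 \frac{J_{\nu+1}(x)}{J_\nu(x)}=2\sum_{m=0}^{\infty}x^{2m+1}\,\sigma_{m+1}(\nu),\qquad \sigma_{n}(\nu)=\sum_{k=1}^{\infty}j_{\nu,k}^{-2n},
\]
again with all terms strictly positive. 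The classical Rayleigh sum identities, obtained by comparing the Taylor series and infinite-product expansions of $J_\nu$, read
\[
 \sigma_{1}(\nu)=\frac{1}{4(\nu+1)},\qquad \sigma_{2}(\nu)=\frac{1}{16(\nu+1)^{2}(\nu+2)}.
\]
Discarding the (nonnegative) tail $m\geq 2$ and specializing to $\nu=n/2$ therefore yields the polynomial lower bound
\[
 x\,\frac{J_{n/2+1}(x)}{J_{n/2}(x)}\;\geq\;\frac{\lambda}{n+2}+\frac{\lambda^{2}}{(n+2)^{2}(n+4)},\qquad \lambda:=x^{2}.
\]

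Combining with the identity $xJ_{n/2+1}(x)/J_{n/2}(x)=1+\alpha$ produces the quadratic inequality
\[
 \frac{\lambda^{2}}{(n+2)^{2}(n+4)}+\frac{\lambda}{n+2}\;\leq\;1+\alpha,
\]
whose positive root is exactly the bound stated in the proposition, as one sees by completing the square to obtain $\bigl(1+2\lambda/((n+2)(n+4))\bigr)^{2}\leq 1+4(1+\alpha)/(n+4)$. The lower bound $\lambda_{2}(\B;\alpha)\geq 0$ and the equality at $\alpha=-1$ follow immediately from \autoref{basic2}. No single step is particularly delicate; the only technical point is verifying the two Rayleigh identities, which are entirely classical. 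The pleasing feature of the argument is that the second Rayleigh sum happens to produce precisely the coefficient needed to match the explicit quadratic form of the asserted bound.
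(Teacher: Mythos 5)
Your proposal is correct and follows essentially the same route as the paper: reduce the Robin condition \eqref{robinbesselcondition} with $\kappa=1$ to $x J_{n/2+1}(x)/J_{n/2}(x)=1+\alpha$, bound the quotient below by $\frac{x^2}{n+2}\bigl(1+\frac{x^2}{(n+2)(n+4)}\bigr)$, and solve the resulting quadratic in $\lambda=x^2$. The only difference is that the paper cites this quotient bound from Ifantis--Siafarikas \cite[formula (1.2)]{IS90}, whereas you rederive it from the Mittag--Leffler expansion and the Rayleigh sums $\sigma_1(\nu)=\frac{1}{4(\nu+1)}$, $\sigma_2(\nu)=\frac{1}{16(\nu+1)^2(\nu+2)}$ (both of which check out), making the argument self-contained.
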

\begin{proof}
The recurrence relation \cite[10.6.2]{DLMF} and formula \eqref{robinbesselcondition} together give
\[
x \fr{J_{n/2+1}(x)}{J_{n/2}(x)} = - x \fr{J_{n/2}^\prime(x)}{J_{n/2}(x)} + \frac{n}{2} = 1+\alpha .
\]
Meanwhile, from \cite[formula (1.2)]{IS90} we have
 \[
 x \fr{J_{n/2+1}(x)}{J_{n/2}(x)}\geq \fr{x^2}{n+2}\left( 1+\fr{x^2}{(n+2)(n+4)}\right) ,
 \]
noting the formula is valid here since $x = \sqrt{\lambda_2(\B;\alpha)} < j_{n/2,1}$. Combining the two relations, we deduce 
\[
x^4 +(n+2)(n+4)x^2 -(1+\alpha)(n+2)^2(n+4) \leq 0 .
\]
Now the quadratic formula implies the desired bound on $\lambda_2(\B;\alpha)=x^2$.
\end{proof}

Next consider $\alpha<-1$, in which range the second eigenvalue is negative. 
\begin{proposition} \label{lambdalowerbound}
If $\alpha < -1$ then 
\[
-(\alpha+1)^2 + (n+2)(\alpha+1) \leq \lambda_2(\B;\alpha) < -(\alpha+1)^2 + n(\alpha+1) .
\]
\end{proposition}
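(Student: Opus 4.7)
My plan is to reparametrize by $y = \sqrt{-\lambda_2(\B;\alpha)} > 0$ and $s = -(1+\alpha) > 0$ (both strictly positive since $\alpha < -1$), and then to reduce the claimed double inequality to two Tur\'an-type inequalities for modified Bessel functions. From \autoref{basic2} and the analysis in \autoref{preliminaries}, the second eigenfunction of the ball corresponds to $\kappa=1$, and the Robin boundary condition \eqref{modifiedrobinbesselcondition} reads $y I_{n/2}'(y)/I_{n/2}(y) = n/2 - 1 - \alpha$. I would then apply the recurrence $y I_\nu'(y) = \nu I_\nu(y) + y I_{\nu+1}(y)$ at $\nu = n/2$ to rewrite this as the compact identity $y I_{n/2+1}(y)/I_{n/2}(y) = s$. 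Setting $R := I_{n/2+1}(y)/I_{n/2}(y)$, this says $yR = s$.

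Next I would translate the claimed bounds into bounds on $R$. A short algebraic manipulation shows that $-(\alpha+1)^2 + (n+2)(\alpha+1) \leq -y^2 < -(\alpha+1)^2 + n(\alpha+1)$ is equivalent to $s^2 + ns < y^2 \leq s^2 + (n+2)s$, and dividing by $y^2 > 0$ gives the equivalent pair
\[
R^2 + \frac{n}{y}\,R \,<\, 1 \,\leq\, R^2 + \frac{n+2}{y}\,R .
\]
Applying the recurrence $I_{\nu-1}(y) - I_{\nu+1}(y) = (2\nu/y) I_\nu(y)$ at $\nu = n/2$ and at $\nu = n/2+1$ (divided through by $I_{n/2}$ and $I_{n/2+1}$ respectively) yields the two identities
\[
\frac{I_{n/2-1}(y)}{I_{n/2}(y)} = R + \frac{n}{y},\qquad \frac{I_{n/2+2}(y)}{I_{n/2+1}(y)} = \frac{1}{R} - \frac{n+2}{y} .
\]
Substituting these into the previous display and multiplying through by $1/R>0$ converts the pair of inequalities into the ratio inequalities
\[
\frac{I_{n/2-1}(y)}{I_{n/2}(y)} < \frac{I_{n/2}(y)}{I_{n/2+1}(y)} \qquad \text{and} \qquad \frac{I_{n/2+2}(y)}{I_{n/2+1}(y)} \leq \frac{I_{n/2+1}(y)}{I_{n/2}(y)},
\]
that is, into the Tur\'an-type inequalities
\[
I_{n/2}(y)^2 > I_{n/2-1}(y)\,I_{n/2+1}(y) \qquad \text{and} \qquad I_{n/2+1}(y)^2 \geq I_{n/2}(y)\,I_{n/2+2}(y) .
\]

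To finish, I would invoke the classical Tur\'an inequality $I_\nu(y)^2 > I_{\nu-1}(y) I_{\nu+1}(y)$ for modified Bessel functions of the first kind, which holds strictly for every $y>0$ and every $\nu > -1$ (due to Thiruvenkatachar and Nanjundiah; see any standard reference on Bessel function inequalities). Since $n \geq 2$ gives $n/2 \geq 1$, this inequality applies at both $\nu = n/2$ and $\nu = n/2+1$, furnishing the two required bounds. The main obstacle I anticipate is purely bookkeeping: choosing the parametrization $(y,s,R)$ and the two instances of the Bessel recurrence so that the desired eigenvalue bounds correspond cleanly to Tur\'an inequalities at two consecutive orders; once that reduction is in place, the conclusion is immediate. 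As a byproduct of this argument, the upper bound $\lambda_2(\B;\alpha) \leq -(\alpha+1)^2 + (n+2)(\alpha+1)$ is in fact strict.
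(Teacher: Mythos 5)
Your argument is correct, and it takes a genuinely different route from the paper's. The paper starts from the same identity $-(\alpha+1) = y\,I_{n/2+1}(y)/I_{n/2}(y)$, but then invokes Amos's explicit two-sided bounds for the ratio $I_{\nu+1}(y)/I_\nu(y)$ (formulas (9) and (11) of \cite{A74}), which contain square roots; this forces a preliminary estimate $\lambda_2(\B;\alpha) < (n/2+1)(\alpha+1)$ to guarantee positivity before squaring, followed by solving quadratic inequalities and discarding a term. Your reduction shows instead that, after passing through the three-term recurrence, the two stated bounds are \emph{exactly} the Tur\'an-type inequalities $I_{n/2}(y)^2 > I_{n/2-1}(y)\,I_{n/2+1}(y)$ and $I_{n/2+1}(y)^2 > I_{n/2}(y)\,I_{n/2+2}(y)$, both applicable since $n/2 \geq 1$. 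I checked the bookkeeping ($s=yR$, the two recurrence identities, and the division by $R>0$ and $y^2>0$) and it goes through. This route is cleaner: no squaring, no auxiliary estimate, and it exposes the proposition as a disguised statement that $\nu \mapsto I_{\nu+1}(y)/I_\nu(y)$ decreases across the orders $n/2-1,\ n/2,\ n/2+1$ --- which is also the monotonicity underlying Amos's inequalities, so the two proofs ultimately rest on the same classical fact, yours just more directly. What the paper's route buys in exchange is a slightly stronger upper bound than the one stated (as remarked there), which your argument does not produce. One small slip in your closing sentence: the bound that your argument upgrades to a strict inequality is the \emph{lower} bound, i.e.\ $-(\alpha+1)^2 + (n+2)(\alpha+1) < \lambda_2(\B;\alpha)$, coming from strictness of the Tur\'an inequality at order $n/2+1$; you wrote that inequality with its direction reversed and called it the upper bound (which is already stated as strict in the proposition).
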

The proof will yield a slightly stronger upper bound than the one stated. 
\begin{proof}
First we show $\lambda_2(\B;\alpha) < (n/2+1)(\alpha+1)$, which is weaker than the upper bound we will ultimately prove. A recurrence relation \cite[10.29.2]{DLMF} for the modified Bessel function with $\nu=n/2$ gives 
\[
y \frac{I_{n/2}^\prime(y)}{I_{n/2}(y)} - \frac{n}{2} = y\frac{I_{n/2+1}(y)}{I_{n/2}(y)}  ,
\]
while applying another formula from \cite[10.29.2]{DLMF} with $\nu=n/2+1$ gives
\[
\frac{(n/2)+1}{y} I_{n/2+1}(y) = - I_{n/2+1}^\prime(y) + I_{n/2}(y) < I_{n/2}(y) .
\]
Putting these formulas into the eigenvalue condition \eqref{modifiedrobinbesselcondition} with $\kappa=1$, we obtain
\[
-(\alpha+1) = y\frac{I_{n/2+1}(y)}{I_{n/2}(y)} < \frac{y^2}{(n/2)+1} .
\]
Substituting $y=\sqrt{-\lambda_2(\B;\alpha)}$ yields 
\begin{equation} \label{lambda2prelim}
\lambda_2(\B;\alpha) < (n/2+1)(\alpha+1) , \qquad \alpha \in (-\infty,-1),
\end{equation}
as claimed. 

Next, a lower bound by Amos \cite[formula (9)]{A74} gives
\[
-(\alpha+1) = y\frac{I_{n/2+1}(y)}{I_{n/2}(y)} \geq \frac{y^2}{(n/2+1) + \sqrt{y^2 + (n/2+1)^2}} .
\]
Rearranging, 
\[
-(\alpha+1) \sqrt{y^2 + (n/2+1)^2} \geq y^2 + (n/2+1) (\alpha+1) .
\]
The right side of this inequality is positive by \eqref{lambda2prelim}. Squaring both sides and canceling the common factor of $y^2$ yields the lower bound on $\lambda_2(\B;\alpha)$ in the proposition. 

An upper bound by Amos \cite[formula (11)]{A74} gives
\[
-(\alpha+1) = y\frac{I_{n/2+1}(y)}{I_{n/2}(y)} \leq \frac{y^2}{n/2+ \sqrt{y^2 + (n/2+2)^2}} .
\]
Rearranging, 
\[
-(\alpha+1) \sqrt{y^2 + (n/2+2)^2} \leq y^2 + (n/2) (\alpha+1) .
\]
Squaring both sides and solving the resulting quadratic inequality for the negative quantity $\lambda_2(\B;\alpha) = -y^2$ yields
\[
\lambda_2(\B;\alpha) \leq \frac{1}{2} \left( -(\alpha+1)^2 + n(\alpha+1) - \sqrt{\big((\alpha+1)^2 - n(\alpha+1)\big)^2 + 8(n+2)(\alpha+1)^2} \right) .
\]
By discarding the second term in the square root we obtain the upper bound on $\lambda_2(\B;\alpha)$ in the proposition. 
\end{proof}

\section{\bf Proof of \autoref{mainthm} when $-R^{-1} \leq \alpha \leq 0$, and proof of \autoref{brockweinberger}}
\label{higherdimproof1}

First rescale the domain so that $\Omega$ has the same volume as the unit ball $\B$, using the scaling relation
\[
\lambda(\Omega;\alpha) = t^{-2} \, \lambda(t^{-1} \Omega ; t\alpha) , \qquad t>0 ,
\]
with the particular choice $t=R$. 

\smallskip Step 1. After this rescaling, we have $R=1,B=\B$, and the task is to prove
\[
\lambda_2(\Omega;\alpha) \leq \lambda_2(\B;\alpha) , \qquad \alpha \in [-1,0] .
\]
The restriction $\alpha \geq -1$ ensures by \autoref{basic2} that $\lambda_2(\B;\alpha) \geq 0$. Thus we may assume $\lambda_2(\Omega;\alpha) \geq 0$, since otherwise there is nothing to prove. 

\smallskip Step 2. To adapt Weinberger's method from the Neumann case \cite{W56}, we define trial functions by 
\[
v_{i+1}(x) = g(r)\frac{x_i}{r} , \qquad i=1,\dots,n, 
\]
where $g(r)$ equals the radial part of the second Robin eigenfunction of the unit ball. We constructed $g$ in \autoref{basic2} for $r \in [0,1]$, and now extend it outside the ball by 
\[
g(r) = g(1)e^{-\alpha(r-1)} , \qquad r > 1 .
\]
The slopes of $g$ from the left and right agree at $r=1$ because the Robin boundary condition on the unit sphere and the preceding formula for $g$ outside the sphere give 
\[
g^\prime(1-) = - \alpha g(1) = g^\prime(1+) .
\] 
Properties of $g$ we need from \autoref{basic2} are that $g(r)$ is nonnegative and strictly increasing for $r \in [0,1]$, with $g(0)=0, g^\prime(0)>0$. Note $g(r)$ is increasing for $r \geq 1$, and so $\int_0^\infty g(r) \, dr = \infty$. Clearly $v_{i+1}$ is $C^1$-smooth on $\Rn$. 

The center of mass result in \autoref{centerofmass} guarantees the domain $\Omega$ can be translated to make the following orthogonality conditions hold:  
\[
\int_\Omega v_{i+1} v_1 \, dx = 0 , \qquad i=1,\dots,n,
\]
where $v_1 \geq 0$ is the eigenfunction for the first eigenvalue $\lambda_1(\Omega;\alpha)$. Each $v_{i+1}$ is therefore a valid trial function for the second eigenvalue $\lambda_2(\Omega;\alpha)$. 

\smallskip Step 3. The Rayleigh characterization of the second eigenvalue implies
$\lambda_2(\Omega;\alpha) \leq Q[v_{i+1}]$, and so 
\[
\lambda_2(\Omega;\alpha) \int_\Omega v_{i+1}^2 \, dx 
\leq \int_\Omega |\nabla v_{i+1}|^2 \, dx + \alpha \int_{\partial \Omega} v_{i+1}^2 \, dS , \qquad i=1,\dots,n,
\]
with equality when $\Omega = \B$. Substituting the definition $v_{i+1} = g(r) x_i/r$, we obtain
\[
\begin{split}
& \lambda_2(\Omega;\alpha) \int_\Omega g(r)^2 x_i^2/r^2 \, dx \\
\leq & \int_\Omega \left( g^\prime(r)^2 x_i^2/r^2 + r^{-2} g(r)^2 (1-x_i^2/r^2) \right) dx + \alpha \int_{\partial \Omega} g(r)^2 x_i^2/r^2 \, dS .
\end{split}
\]
Summing over $i$ gives
\[
\lambda_2(\Omega;\alpha) \int_\Omega g(r)^2 \, dx \\
\leq \int_\Omega \left( g^\prime(r)^2 + (n-1) r^{-2} g(r)^2 \right) dx + \alpha \int_{\partial \Omega} g(r)^2 \, dS .
\]
Next we estimate the boundary integral with a domain integral: from formula \eqref{boundaryint} in \autoref{boundary}, and the fact that $\alpha \leq 0$, we deduce 
\begin{equation} \label{mainformulavol}
\lambda_2(\Omega;\alpha) \int_\Omega g(r)^2 \, dx 
\leq \int_\Omega h(r) \, dx
\end{equation}
where 
\begin{equation} \label{hdef}
h(r) = g^\prime(r)^2 + (n-1) r^{-2} g(r)^2 + 2\alpha g(r)g^\prime(r) + \alpha \frac{n-1}{r} g(r)^2 .
\end{equation}
Equality holds in \eqref{mainformulavol} if $\Omega=\B$. Note $h$ is continuous, since we constructed $g^\prime$ to be continuous even at $r=1$.

\smallskip Step 4. To continue adapting Weinberger's method, we show the integrands are monotonic: 
\begin{lemma}\label{monotonicity}
$g(r)^2$ is increasing and $h(r)$ is strictly decreasing, for $0<r<\infty$. 
\end{lemma}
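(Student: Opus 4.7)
My plan is to split $[0,\infty)$ into $[0,1]$ and $[1,\infty)$, where $g$ is defined by different formulas, and to treat each piece separately. The monotonicity of $g^{2}$ is immediate: on $[0,1]$, \autoref{basic2} (with $\alpha \le 0$) gives $g\ge 0$ and $g'>0$ on $(0,1)$; on $[1,\infty)$, $g(r)=g(1)e^{-\alpha(r-1)}$ is nondecreasing because $-\alpha\ge 0$. So the real work is in establishing the strict decrease of $h$.

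For the strict decrease of $h$ on $[0,1]$, I would differentiate and use the Bessel ODE $g''=-\frac{n-1}{r}g'+\bigl(\frac{n-1}{r^{2}}-\lambda\bigr)g$ to eliminate $g''$, obtaining after simplification the clean identity
$$h'(r)=-\frac{2(n-1)}{r}\Bigl(g'(r)-\frac{g(r)}{r}\Bigr)^{2}-2\lambda\,g(r)\bigl(g'(r)+\alpha g(r)\bigr)+\alpha\Bigl[2g'(r)^{2}+\frac{n-1}{r^{2}}g(r)^{2}\Bigr].$$
The first and third terms are clearly nonpositive on the range $\alpha\in[-1,0]$ (where $\lambda\ge 0$), so the crux of the proof is showing that $g'+\alpha g$ is nonnegative on $[0,1]$.

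To control the sign of $g'+\alpha g$, I would substitute $\tilde g(r):=g(r)/r$; a direct computation starting from the Bessel equation with $\kappa=1$ shows that $\tilde g$ solves the Bessel equation with $\kappa=0$ in dimension $n+2$, together with the Robin boundary condition of parameter $\tilde\alpha:=1+\alpha\in[0,1]$. By \autoref{basic1} applied in dimension $n+2$, $\tilde g$ is a positive constant multiple of the first Robin eigenfunction on the unit ball and is radially nonincreasing. The algebraic identity
$$g'(r)+\alpha g(r)=(1-r)\,\tilde g(r)+r\bigl(\tilde g'(r)+\tilde\alpha\,\tilde g(r)\bigr)$$
then exhibits $g'+\alpha g$ as a sum of two nonnegative pieces: $(1-r)\tilde g\ge 0$ because $r\le 1$, while $\tilde F:=\tilde g'+\tilde\alpha\tilde g$ is shown nonnegative on $[0,1]$ by applying the integrating factor $r^{n+1}e^{-\tilde\alpha r}$ to the first-order linear equation that $\tilde F$ satisfies and invoking the endpoint values $\tilde F(0)=\tilde\alpha\,\tilde g(0)\ge 0$ and $\tilde F(1)=0$ (Robin boundary condition). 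Once $g'+\alpha g\ge 0$ is in hand, strict decrease of $h$ on $[0,1]$ follows from the identity for $h'$: when $\alpha<0$ the third term is strictly negative on $(0,1)$, and when $\alpha=0$ the middle term $-2\lambda gg'$ is strictly negative because $\lambda=\mu_{1}(\B)>0$ and $g,g'>0$ there. I expect the verification of $\tilde F\ge 0$ to be the main technical step.

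Finally, on $[1,\infty)$ the exponential formula gives $g'=-\alpha g$, so $g'+\alpha g\equiv 0$; using the factored form $h=(g'+\alpha g)^{2}+g^{2}\bigl[\tfrac{(n-1)(1+\alpha r)}{r^{2}}-\alpha^{2}\bigr]$, this simplifies $h(r)$ to $g(1)^{2}e^{-2\alpha(r-1)}\bigl[\tfrac{n-1}{r^{2}}+\tfrac{\alpha(n-1)}{r}-\alpha^{2}\bigr]$, and direct differentiation reduces $h'(r)<0$ to an elementary polynomial inequality in $r\ge 1$ and $\alpha\in[-1,0]$ that is routine to verify term-by-term. Continuity of $h$ at $r=1$ then glues the two monotonicity statements together.
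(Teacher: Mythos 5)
Your overall strategy matches the paper's: differentiate $h$, eliminate $g''$ via the radial equation \eqref{besseleq} with $\kappa=1$, reduce everything to the sign of $g'+\alpha g$ on $(0,1)$, and handle $r\ge 1$ by the explicit exponential formula. Your identity for $h'$ and your factored form of $h$ on $[1,\infty)$ both agree with the paper's computations. The problem is the step you yourself flag as the main technical one: your proof that $\widetilde F:=\widetilde g\,'+\widetilde\alpha\,\widetilde g\ge 0$ does not work as described. The function $\widetilde F$ does not satisfy a closed first-order linear ODE --- from $\widetilde g\,''+\frac{n+1}{r}\widetilde g\,'+\lambda\widetilde g=0$ you get $\widetilde F\,'$ only in terms of $\widetilde g$ and $\widetilde g\,'$ separately, so there is no integrating factor to apply, and the endpoint values $\widetilde F(0)\ge 0$, $\widetilde F(1)=0$ by themselves do not force nonnegativity in between. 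The claim $\widetilde F\ge 0$ is in fact true, but proving it requires a genuine argument (for instance, that $r\widetilde g\,'/\widetilde g$ is concave on $[0,1]$ and hence lies above the chord $-\widetilde\alpha r$); as written, this is a gap in the one nontrivial step of your proof.

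The irony is that the whole dimension-shift detour through $\widetilde g=g/r$ is unnecessary: \autoref{basic2} already records that $rg'(r)+\alpha g(r)\ge 0$ on $(0,1)$ (with equality throughout only when $\alpha=-1$), and since $g'>0$ and $r<1$ one gets immediately
\[
g'(r)+\alpha g(r) \;>\; rg'(r)+\alpha g(r)\;\ge\;0 ,
\]
which is exactly how the paper disposes of the sign of the $-2\lambda g(g'+\alpha g)$ term. If you replace your $\widetilde F$ argument by this one-line appeal to \autoref{basic2}, the rest of your proof (the sign analysis of the three groups of terms in $h'$ on $(0,1)$, the strictness discussion split into the cases $\alpha=0$ and $\alpha<0$, and the elementary computation on $[1,\infty)$) goes through and coincides with the paper's.
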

\begin{proof}
By construction $g(r)$ is strictly increasing and positive for $0 < r \leq 1$, and is strictly increasing for $r \geq 1$ except when $\alpha=0$ (in which case $g(r)$ is constant for $r \geq 1$). The same properties hold for $g(r)^2$. 

For $h(r)$, when $0<r<1$ we differentiate the definition \eqref{hdef} to obtain
\begin{align*}
h^\prime(r) 
& = - 2\lambda_2(\B;\alpha) g(r) \big( g^\prime(r) + \alpha g(r) \big) - \frac{2(n-1)}{r} \left( g^\prime(r) - \frac{g(r)}{r} \right)^{\! 2} \\
& \qquad \qquad + \alpha (n-1) \frac{g(r)^2}{r^2} + 2\alpha g^\prime(r)^2 ,
\end{align*}
where we eliminated $g^{\prime\prime}(r)$ from the formula with the help of the differential equation \eqref{besseleq} (taking $\kappa=1$ there). The formula for $h^\prime(r)$ contains four terms. The second term is certainly less than or equal to zero, and so are the third and fourth terms since $\alpha \leq 0$. The first term is less than or equal to zero since $\lambda_2(\B;\alpha) \geq 0, g(r)>0$, and 
\[
g^\prime(r) + \alpha g(r) > r g^\prime(r) + \alpha g(r)  \geq 0
\]
by \autoref{basic2}. Thus $h^\prime(r) \leq 0$. In fact, when $\alpha=0$ the first term in $h^\prime(r)$ is negative, and when $-1 \leq \alpha < 0$ the fourth term is negative. Thus $h^\prime(r)<0$ when $0<r<1$. 

Now consider $r \geq 1$. Substituting $g(r)=g(1)e^{-\alpha(r-1)}$ into the definition \eqref{hdef} for $h(r)$ gives
\[
h(r) = g(1)^2 \left( -\alpha^2 + (n-1) \frac{1+\alpha r}{r^2} \right) e^{-2\alpha(r-1)} ,
\]
and so 
\[
h^\prime(r) = g(1)^2 \left( 2\alpha^3 - \frac{2(n-1)}{r^3} \Big[ (1+\alpha r)^2 - \alpha r/2 \Big] \right) e^{-2\alpha(r-1)} ,
\]
which is negative since $\alpha \leq 0$. 
\end{proof}

\smallskip Step 5. Since $g^2$ is increasing and $h$ is strictly decreasing by \autoref{monotonicity}, the mass transplantation in \autoref{transplantation} implies that
\begin{align} 
\int_\B g(r)^2 \, dx \leq \int_\Omega g(r)^2 \, dx , \label{mass1} \\
\int_\Omega h(r) \, dx \leq \int_\B h(r) \, dx . \label{mass2} 
\end{align}
In inequality \eqref{mass2}, equality holds if and only if $\Omega = \B$. 

Inserting the relations \eqref{mass1} and \eqref{mass2} into inequality \eqref{mainformulavol} gives 
\begin{equation} \label{usingmonot}
\lambda_2(\Omega;\alpha) \int_\B g(r)^2 \, dx 
\leq \int_\B h(r) \, dx ,
\end{equation}
where on the left side of the inequality we used the supposition $\lambda_2(\Omega;\alpha) \geq 0$. Our derivation shows that equality holds if and only if $\Omega = \B$. Hence $\lambda_2(\Omega;\alpha) \leq \lambda_2(\B;\alpha)$, with equality if and only if $\Omega = \B$. 

\smallskip
\noindent \emph{Remark.} The domain $\Omega$ in the proof above is obtained from the original domain by rescaling and translation. Thus the equality statement for the original domain is simply that it is a ball, not necessarily centered at the origin. 

\subsection*{Proof of \autoref{brockweinberger}}
(a) When $\alpha=0$, the Robin eigenvalue problem becomes the Neumann problem:
\[
\begin{split}
- \Delta u & = \mu u \ \quad \text{in $\Omega$,} \\
\frac{\partial u}{\partial\nu} & = 0 \qquad \text{on $\partial \Omega$.} 
\end{split}
\]
The Neumann spectrum is traditionally indexed from $j=0$ whereas we index the Robin spectrum from $j=1$, and so $0=\mu_0 < \mu_1 \leq \mu_2 \leq \dots$ with $\mu_1(\Omega)=\lambda_2(\Omega;0)$. Thus \autoref{mainthm} with $\alpha=0$ gives Weinberger's upper bound \cite{W56} on the first nontrivial  Neumann eigenvalue: $\mu_1(\Omega) \leq \mu_1(B)$ with equality if and only if $\Omega$ is a ball. 

(b) Next, the Steklov spectrum of the Laplacian is denoted $0=\sigma_0 < \sigma_1 \leq \sigma_2 \leq \dots$ where the eigenvalue problem is
\[
\begin{split}
\Delta u & = 0 \ \ \quad \text{in $\Omega$,} \\
\frac{\partial u}{\partial\nu} & = \sigma u \quad \text{on $\partial \Omega$.} 
\end{split}
\]
Thus $\sigma$ belongs to the Steklov spectrum exactly when $0$ belongs to the Robin spectrum with $\alpha=-\sigma$. 

To prove the claim $\sigma_1(\Omega) \leq \sigma_1(B)$ in the corollary, again rescale $\Omega$ so that it has the same volume as the unit ball. Choosing $\alpha=-1$ in \autoref{mainthm} implies
\[
\lambda_2\big( \Omega;-1 \big) \leq \lambda_2\big( \B;-1 \big) =0 .
\]
Choosing instead $\alpha=0$ gives $\lambda_2(\Omega;0)=\mu_1(\Omega) > 0$. Since Robin eigenvalues vary continuously with $\alpha$, some number $\widetilde{\alpha} \in [-1,0)$ must exist for which $\lambda_2(\Omega;\widetilde{\alpha}) = 0$. We choose $\widetilde{\alpha}$ to be the greatest such number, so that $\lambda_2(\Omega;\alpha) > 0$ for all $\alpha > \widetilde{\alpha}$. Then $-\widetilde{\alpha}$ belongs to the Steklov spectrum, and is the smallest positive Steklov eigenvalue. 

Hence $\sigma_1(\Omega) = -\widetilde{\alpha}\leq 1 = \sigma_1(\B)$, as we needed to show. If equality holds then $\widetilde{\alpha}=-1$, and so $\lambda_2(\Omega;-1) =  0 = \lambda_2(\B;-1)$. The equality statement in \autoref{mainthm} then implies $\Omega$ is a ball.

\section{\bf Proof of \autoref{mainthm} when $-\frac{n+1}{n} R^{-1} \leq \alpha < -R^{-1}$}
\label{higherdimproof-adapted}

By rescaling we may take $R=1$ so that $B$ is the unit ball $\B$, as previously. When $\alpha<-1$ the second eigenvalue of the unit ball is negative, and so the derivation of inequality \eqref{usingmonot} from \eqref{mass1} breaks down. So we modify the proof of \autoref{mainthm}. By subtracting $\lambda_2(\B;\alpha) \int_\Omega g(r)^2 \, dx$ from both sides of \eqref{mainformulavol} we find
\begin{equation} \label{mainformulavol-adapted}
\left( \lambda_2(\Omega;\alpha) - \lambda_2(\B;\alpha) \right) \int_\Omega g(r)^2 \, dx 
\leq \int_\Omega \widetilde{h}(r) \, dx
\end{equation}
where 
\begin{align} 
\widetilde{h}(r) 
& = h(r) - \lambda_2(\B;\alpha)g(r)^2 \notag \\
& = g^\prime(r)^2 + (n-1) r^{-2} g(r)^2 + 2\alpha g(r)g^\prime(r) + \alpha \frac{n-1}{r} g(r)^2 - \lambda_2(\B;\alpha)g(r)^2 . \label{hdef-adapted}
\end{align}
Note $\widetilde{h}$ is continuous, since $g$ and $g^\prime$ are continuous. We now modify Steps 4 and 5. 

\smallskip Step $\widetilde{4}$. The modified integrand is monotonic: 
\begin{lemma}\label{monotonicity-adapted}
If $\alpha \in [- \frac{n+1}{n},-1)$ then $\widetilde{h}(r)$ is strictly decreasing, for $0<r<\infty$. 
\end{lemma}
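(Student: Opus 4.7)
The plan is to show $\widetilde h'(r)<0$ separately on the intervals $(0,1)$ and $(1,\infty)$, and then conclude strict monotonicity on $(0,\infty)$ by continuity of $\widetilde h$ at $r=1$.

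On $(0,1)$, I would first differentiate $\widetilde h = h - \lambda_2(\B;\alpha)\,g^2$ to obtain $\widetilde h'(r) = h'(r) - 2\lambda_2(\B;\alpha)\,g\,g'$, and substitute the explicit formula for $h'(r)$ computed inside the proof of \autoref{monotonicity} (which already uses the Bessel equation \eqref{besseleq} with $\kappa=1$ to eliminate $g''$). The new cross term $-2\lambda_2(\B;\alpha)g g'$ merges with the existing $-2\lambda_2(\B;\alpha) g(g'+\alpha g)$ to produce
\[
\widetilde h'(r) = -2\lambda_2(\B;\alpha)\,g\,(2g'+\alpha g) + 2\alpha (g')^2 - \frac{2(n-1)}{r}\Bigl(g'-\frac{g}{r}\Bigr)^{\! 2} + \alpha\,\frac{n-1}{r^2}\, g^2 .
\]

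The crux is to collect the terms $-4\lambda_2(\B;\alpha) g g' + 2\alpha(g')^2 - 2\alpha\lambda_2(\B;\alpha) g^2$ into a quadratic in $g'$ with leading coefficient $2\alpha<0$ and discriminant $16\lambda_2(\B;\alpha)\,g^2\bigl(\lambda_2(\B;\alpha)+\alpha^2\bigr)$. Since $\lambda_2(\B;\alpha)<0$, this discriminant is non-positive exactly when $\alpha^2 \ge -\lambda_2(\B;\alpha)$. Feeding in the sharp bound $-\lambda_2(\B;\alpha) \le (\alpha+1)^2-(n+2)(\alpha+1)$ from \autoref{lambdalowerbound} and expanding, the condition reduces by direct algebra to $n\alpha+(n+1)\ge 0$, i.e.\ $\alpha \ge -\tfrac{n+1}{n}$, which is precisely the hypothesis. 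Thus the quadratic piece is $\le 0$, the squared term $-\tfrac{2(n-1)}{r}(g'-g/r)^2$ is $\le 0$, and $\alpha(n-1)g^2/r^2$ is strictly negative since $\alpha<0$ and $g>0$ on $(0,1)$; hence $\widetilde h'(r)<0$ on $(0,1)$.

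For $r\ge 1$, I would substitute $g(r)=g(1)e^{-\alpha(r-1)}$ and $g'(r)=-\alpha g(r)$ directly into $\widetilde h$ and differentiate to reach
\[
\widetilde h'(r) = g(1)^2 e^{-2\alpha(r-1)}\!\left[\, 2\alpha\bigl(\alpha^2+\lambda_2(\B;\alpha)\bigr) - \frac{2(n-1)}{r^3}\Bigl((1+\alpha r)^2 - \tfrac{\alpha r}{2}\Bigr) \right] .
\]
The first bracketed term is $\le 0$ by the same discriminant inequality, and the second is strictly negative because the polynomial $q\mapsto q^2+\tfrac{3}{2}q+1$ has discriminant $\tfrac{9}{4}-4<0$, so $(1+\alpha r)^2-\tfrac{\alpha r}{2}>0$ for every real $\alpha r$. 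Thus $\widetilde h'(r)<0$ on $(1,\infty)$. Continuity of $g$ and $g'$ at $r=1$ makes $\widetilde h$ continuous there, and strict decrease on all of $(0,\infty)$ follows.

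The main obstacle is pinning down the precise threshold $-\tfrac{n+1}{n}$. Unlike in \autoref{monotonicity}, the cross term $-4\lambda_2(\B;\alpha) g g'$ in $\widetilde h'$ is now \emph{positive} (since $-\lambda_2(\B;\alpha)>0$ and both $g,g'>0$), so a termwise sign analysis fails. One must instead cancel this positive contribution against $2\alpha(g')^2 - 2\alpha\lambda_2(\B;\alpha) g^2$ via the quadratic discriminant, and calibrating this cancellation against the explicit eigenvalue bound in \autoref{lambdalowerbound} is exactly what produces the stated endpoint.
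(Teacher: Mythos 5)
Your proposal is correct and follows the same skeleton as the paper's proof: split into $0<r<1$ and $r\ge 1$, eliminate $g''$ on $(0,1)$ via the Bessel equation \eqref{besseleq} with $\kappa=1$, and reduce everything to the key inequality $\lambda_2(\B;\alpha)+\alpha^2\ge 0$, which both arguments extract from \autoref{lambdalowerbound} together with $\alpha\ge -\tfrac{n+1}{n}$ (the paper's inequality \eqref{usefullower}). The one substantive difference is on $(0,1)$: the paper treats $\widetilde h'$ as a single quadratic form in $(g',g/r)$ and verifies $b^2-4ac<0$ by checking the convex function $q(\lambda)=r^2(b^2-4ac)$ at the endpoints $\lambda=0$ and $\lambda=-\alpha^2$, whereas you first peel off the manifestly nonpositive square $-\tfrac{2(n-1)}{r}(g'-g/r)^2$ and the strictly negative term $\alpha(n-1)g^2/r^2$, leaving a reduced quadratic in $g'$ whose discriminant $16\lambda_2 g^2(\lambda_2+\alpha^2)$ is nonpositive \emph{exactly} when $\lambda_2+\alpha^2\ge 0$; this is a slightly cleaner route to the same threshold, and your identities and sign checks (including $(1+\alpha r)^2-\tfrac{\alpha r}{2}>0$ for $r\ge 1$) all verify.
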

\begin{proof}
\autoref{lambdalowerbound} implies that 
\begin{align}
\lambda_2(\B;\alpha) + \alpha^2
& \geq -(\alpha+1)^2 + (n+2)(\alpha+1) + \alpha^2 \notag \\
& = n(\alpha+1) + 1 \notag \\
& \geq 0 \label{usefullower}
\end{align}
because $\alpha \geq -(n+1)/n$. We will use this fact below, when treating $\widetilde{h}(r)$.

First consider the range $0 < r < 1$. Differentiating definition \eqref{hdef-adapted} gives
\begin{align*}
\widetilde{h}^\prime(r) 
& = 2 \left( a g^\prime(r)^2 + b g^\prime(r) g(r)/r+c \big(g(r)/r\big)^2 \right)
\end{align*}
where we eliminated $g^{\prime\prime}(r)$ from the formulas with the help of the differential equation \eqref{besseleq} (taking $\kappa=1$ there), and the coefficients are 
\begin{align*}
a & = \alpha - \frac{n-1}{r} , \\
b & = - 2 \lambda r + 2 \frac{n-1}{r} , \\
c & = -\alpha \lambda r^2 + \frac{1}{2}(n-1) \alpha - \frac{n-1}{r} ,
\end{align*}
with $\lambda=\lambda_2(\B;\alpha)$.

The coefficients $a$ and $c$ are negative. We will show the discriminant $b^2-4ac$ is also negative, so that $\widetilde{h}^\prime(r)<0$ as desired. Fix the parameters $n \geq 2, 0 < r < 1, -\frac{n+1}{n} \leq \alpha < -1$. We will prove the discriminant is negative for all values $\lambda \in [-\alpha^2, 0]$, which suffices for our purposes since $\lambda_2(\B;\alpha)$ lies in that interval by inequality \eqref{usefullower}, when $-\frac{n+1}{n} \leq \alpha < -1$. 

The discriminant is a quadratic function of $\lambda$: let
\begin{align*}
q(\lambda) 
& = r^2 (b^2 - 4ac) \\
& = 2 r \left( (n-1) \alpha (n+1 - r \alpha) - 2 r \big(2(n-1) + r \alpha (n-1 - r \alpha) \big) \lambda + 2 r^3 \lambda^2 \right) ,
\end{align*}
where we multiplied the discriminant by $r^2$ for convenience. Clearly $q(\lambda)$ is convex, since $q^{\prime \prime}(\lambda)=8r^4>0$. In order to prove $q$ is negative we need only show negativity at the endpoints, that is, at $\lambda=0$ and $\lambda=-\alpha^2$. This is easily done, since 
\[
q(0) = 2 r \alpha (n-1) (n+1 - r \alpha) < 0
\]
and 
\[
q \left( -\alpha^2 \right) = \frac{1}{2} r \alpha (n-1) \left( 8 (r\alpha + 3/4)^2 + 4n - 1/2 \right) < 0 .
\]

It remains to consider the range $r \geq 1$. Substituting $g(r)=g(1)e^{-\alpha(r-1)}$ into the definition \eqref{hdef-adapted} gives
\[
\widetilde{h}(r) = g(1)^2 \left( (n-1) \frac{1+\alpha r}{r^2} -\alpha^2  - \lambda_2(\B;\alpha) \right) e^{-2\alpha(r-1)} ,
\]
and so 
\[
\widetilde{h}^\prime(r) = g(1)^2 \left( - \frac{2(n-1)}{r^3} (1+\alpha r)^2 + \frac{n-1}{r^2} \alpha + 2\alpha(\alpha^2 + \lambda_2(\B;\alpha)) \right) e^{-2\alpha(r-1)} .
\]
Recalling that $\lambda_2(\B;\alpha) + \alpha^2 \geq 0$ by \eqref{usefullower}, we conclude $\widetilde{h}^\prime(r) < 0$ when $r>1$, and so $\widetilde{h}(r)$ is strictly decreasing for $r \geq 1$. 
\end{proof}

\smallskip Step $\widetilde{5}$. Since $\widetilde{h}$ is strictly decreasing by \autoref{monotonicity-adapted}, the mass transplantation in \autoref{transplantation} implies that
\[
\int_\Omega \widetilde{h}(r) \, dx \leq \int_\B \widetilde{h}(r) \, dx , 
\]
with equality if and only if $\Omega = \B$. Inserting this last inequality into \eqref{mainformulavol-adapted}, we deduce  
\[
\left( \lambda_2(\Omega;\alpha) - \lambda_2(\B;\alpha) \right) \int_\Omega g(r)^2 \, dx 
\leq \int_\B \widetilde{h}(r) \, dx 
\]
with equality if and only if $\Omega = \B$. The left side equals $0$ when $\Omega = \B$ and so the right side must equal $0$. (Alternatively, the right side can be evaluated to equal $0$ by using the definition of $\widetilde{h}$ and the differential equation satisfied by $g$.) Thus 
\[
\lambda_2(\Omega;\alpha) - \lambda_2(\B;\alpha)  \leq 0 
\]
with equality if and only if $\Omega = \B$, which completes the proof of the theorem.

\section*{Acknowledgments}
This research was supported by a grant from the Simons Foundation (\#429422 to Richard Laugesen), by travel support for Laugesen from the American Institute of Mathematics to the workshop on \emph{Steklov Eigenproblems} (April--May 2018), and by the Funda\c c\~{a}o para a Ci\^{e}ncia e a Tecnologia (Portugal) through project PTDC/MAT-CAL/4334/2014
(Pedro Freitas). The research advanced considerably during the workshop on \emph{Eigenvalues and Inequalities} at Institut Mittag--Leffler (May 2018), organized by Rafael Benguria, Hynek Kovarik and Timo Weidl. 

\appendix

\section{\bf Bessel function facts}

This appendix establishes monotonicity properties of Bessel and modified Bessel functions that were used in \autoref{preliminaries} to analyze the Robin spectrum of the ball.   
\begin{lemma} \label{modifiedbesselmonot}
Let $\nu \geq 0$. The function $rI_\nu^\prime(r)/I_\nu(r)$ is strictly increasing from $\nu$ to $\infty$ as $r$ increases from $0$ to $\infty$. Further, for each positive $r$ the expression $rI_\nu^\prime(r)/I_\nu(r)$ is strictly increasing as a function of $\nu \geq 0$.
\end{lemma}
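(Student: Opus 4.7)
The plan is to extract both monotonicities from a single Riccati-type ODE satisfied by $f(r) = r I_\nu'(r)/I_\nu(r)$. Starting from the modified Bessel equation $r^2 I_\nu'' + r I_\nu' = (r^2+\nu^2) I_\nu$, dividing by $I_\nu$ and rewriting in terms of $f$ yields, after a short calculation,
\[
r f'(r) = r^2 + \nu^2 - f(r)^2 .
\]
The power series for $I_\nu$ gives the boundary value $f(0+) = \nu$ together with the local expansion $f(r) = \nu + r^2/(2(\nu+1)) + O(r^4)$, so in particular $f'>0$ near the origin; the standard asymptotic $I_\nu(r) \sim e^r/\sqrt{2\pi r}$ forces $f(r) \to \infty$ as $r \to \infty$.

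For monotonicity in $r$, I would argue by contradiction: suppose $r_0>0$ is the smallest zero of $f'$. Setting $f'(r_0)=0$ in the Riccati equation forces $f(r_0)^2 = r_0^2 + \nu^2$, and differentiating the Riccati identity and substituting back gives $f''(r_0) = 2 > 0$. By continuity of $f''$, then, $f'$ is strictly increasing through $0$ at $r_0$, making $f'$ negative immediately before $r_0$, contradicting the minimality of $r_0$ (since $f'>0$ on $(0,r_0)$ by hypothesis). Hence $f'(r)>0$ for all $r>0$, which together with the boundary behaviour proves the first claim.

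For monotonicity in $\nu$, take $0 \leq \nu_1 < \nu_2$ and let $D(r) = f_{\nu_2}(r) - f_{\nu_1}(r)$. The initial value is $D(0+) = \nu_2 - \nu_1 > 0$. If $D$ vanishes anywhere on $(0,\infty)$, pick the smallest such $r_0$; subtracting the two Riccati equations at the common value $f_{\nu_1}(r_0) = f_{\nu_2}(r_0)$ yields
\[
D'(r_0) = (\nu_2^2 - \nu_1^2)/r_0 > 0 ,
\]
whereas $D>0$ on $(0,r_0)$ combined with $D(r_0)=0$ forces $D'(r_0) \leq 0$, a contradiction. Thus $D>0$ throughout, giving the strict monotonicity in $\nu$. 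The main delicacy is at the singular point $r=0$: both the value $f(0+) = \nu$ and the local positivity of $f'$ must be read off from the power series of $I_\nu$ rather than from the Riccati equation itself, since the latter degenerates at $r=0$. Once this input is in place, both halves of the lemma reduce to the same ``first-crossing'' contradiction, applied to $f'$ and to $D$ respectively.
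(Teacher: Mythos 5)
Your proof is correct, but it takes a genuinely different route from the paper's. The paper derives the $r$-monotonicity from the Mittag--Leffler/product expansion $rI_\nu'(r)/I_\nu(r)=\nu+2\sum_{m\ge1}r^2/(r^2+j_{\nu,m}^2)$, where each summand is visibly increasing and the sum diverges as $r\to\infty$; for the $\nu$-monotonicity it studies $\log\bigl(I_{\nu+\delta}/I_\nu\bigr)$ and shows, using the modified Bessel equation, that at any critical point of that function the second derivative is positive, which combined with positivity near $r=0$ gives the claim. You instead work entirely with the Riccati equation $rf'=r^2+\nu^2-f^2$ for $f=rI_\nu'/I_\nu$ and run a first-crossing argument twice, once on $f'$ and once on the difference $D=f_{\nu_2}-f_{\nu_1}$; both applications check out (at a first zero of $f'$ the ODE forces $f''=2>0$, and at a first zero of $D$ the subtracted Riccati equations force $r_0D'(r_0)=\nu_2^2-\nu_1^2>0$, each contradicting the one-sided sign of the derivative there). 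Your approach is more self-contained: it needs only the power series at the origin and elementary ODE manipulation, whereas the paper's $r$-argument imports the infinite product over the zeros $j_{\nu,m}$ (and, for the $\nu$-dependence of those zeros in the companion Lemma on $J_\nu$, their monotonicity in $\nu$). The paper's product formula, on the other hand, gives the limits $\nu$ and $\infty$ and the monotonicity at a glance, and parallels its treatment of $rJ_\nu'/J_\nu$. One small economy available to you: the divergence $f(r)\to\infty$ also follows directly from the Riccati equation (a bounded increasing $f$ would force $rf'\sim r^2$, hence $f$ unbounded), so the asymptotic $I_\nu(r)\sim e^r/\sqrt{2\pi r}$ is not really needed. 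Note also that the paper's own $\nu$-monotonicity argument is structurally the same first-crossing idea as yours, just applied to $\log(I_{\nu+\delta}/I_\nu)$ rather than to $D$.
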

\begin{proof}
The infinite product expansion of the modified Bessel function (found by using the relation $I_\nu(r)=i^{-\nu}J_\nu(ir)$ and the product for $J_\nu$ in \cite[10.21.15]{DLMF}) gives
\begin{align}
r \frac{I_\nu^\prime(r)}{I_\nu(r)}
& = r \frac{d\ }{dr} \log I_\nu(r) \notag \\
& = \nu + 2 \sum_{m=1}^\infty \frac{r^2}{r^2 + j_{\nu,m}^2} \label{modifiedlogderiv1} .
\end{align}
Each term of the sum is strictly increasing as a function of $r$, by \eqref{modifiedlogderiv1}, and the sum clearly tends to $\infty$ as $r \to \infty$.

For monotonicity with respect to $\nu$, let $\delta>0$ and $0 \leq \nu < \nu+\delta$. We want to show 
\[
r \frac{I_\nu^\prime(r)}{I_\nu(r)} < r \frac{I_{\nu+\delta}^\prime(r)}{I_{\nu+\delta}(r)} 
\]
for each $r>0$, which is equivalent to 
\[
\left( \log \frac{I_{\nu+\delta}(r)}{I_\nu(r)} \right)^{\! \prime} > 0 .
\]
This inequality holds for small $r>0$, since the leading order term in the power series for $I_\nu(r)$ is $(r/2)^\nu$ (see \cite[10.25.2]{DLMF}), and similarly for $I_{\nu+\delta}(r)$. 

To show the inequality holds for all $r>0$, it suffices to establish the following implication:
\[
\left( \log \frac{I_{\nu+\delta}(r)}{I_\nu(r)} \right)^{\! \prime} = 0 \quad \Longrightarrow \quad \left( \log \frac{I_{\nu+\delta}(r)}{I_\nu(r)} \right)^{\! \prime \prime} > 0 .
\]
Expanded out, the implication reads
\[
\frac{I_\nu^\prime(r)}{I_\nu(r)} = \frac{I_{\nu+\delta}^\prime(r)}{I_{\nu+\delta}(r)} 
\quad \Longrightarrow \quad 
\frac{I_\nu^{\prime \prime}(r)}{I_\nu(r)} - \left( \frac{I_\nu^\prime(r)}{I_\nu(r)} \right)^{\! 2}< \frac{I_{\nu+\delta}^{\prime \prime}(r)}{I_{\nu+\delta}(r)} - \left( \frac{I_{\nu+\delta}^\prime(r)}{I_{\nu+\delta}(r)} \right)^{\! 2} .
\]
We may substitute for the second derivatives using the modified Bessel equation
\[
r^2 I_\nu^{\prime \prime}(r) + r I_\nu^\prime(r) - (r^2+\nu^2) I_\nu(r) = 0 ,
\]
thereby reducing the desired conclusion to
\[
\frac{\nu^2}{r^2} - \frac{1}{r} \frac{I_\nu^\prime(r)}{I_\nu(r)} - \left( \frac{I_\nu^\prime(r)}{I_\nu(r)} \right)^{\! 2} < \frac{(\nu+\delta)^2}{r^2} - \frac{1}{r} \frac{I_{\nu+\delta}^\prime(r)}{I_{\nu+\delta}(r)} - \left( \frac{I_{\nu+\delta}^\prime(r)}{I_{\nu+\delta}(r)} \right)^{\! 2} .
\]
Applying the hypothesis reduces the inequality to $\nu^2 < (\nu+\delta)^2$, which is true.
\end{proof}

\begin{lemma} \label{besselmonot}
Let $\nu \geq 0$. The function $rJ_\nu^\prime(r)/J_\nu(r)$ is strictly decreasing from $\nu$ to $-\infty$ on the interval $(0,j_{\nu,1})$, and strictly decreasing from $\infty$ to $-\infty$ on each interval $(j_{\nu,m},j_{\nu,m+1})$ for $m \geq 1$, that is, between successive zeros of the denominator. 

Further, for each positive $r$, the expression $rJ_\nu^\prime(r)/J_\nu(r)$ is strictly increasing as a function of $\nu \geq 0$ on each interval of $\nu$-values on which $r$ is not a zero of $J_\nu$.
\end{lemma}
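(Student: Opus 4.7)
The plan is to derive both statements from the Weierstrass product for the Bessel function,
\[
J_\nu(r) = \frac{(r/2)^\nu}{\Gamma(\nu+1)} \prod_{m=1}^\infty \left(1 - \frac{r^2}{j_{\nu,m}^2}\right),
\]
whose logarithmic derivative in $r$ gives
\[
r\frac{J_\nu^\prime(r)}{J_\nu(r)} = \nu - 2\sum_{m=1}^\infty \frac{r^2}{j_{\nu,m}^2 - r^2}.
\]
Since $j_{\nu,m} \sim m\pi$ as $m \to \infty$, this series and its termwise $r$- and $\nu$-derivatives converge uniformly on compact subsets away from the poles, so termwise differentiation is justified.

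For monotonicity in $r$ at fixed $\nu$, I would compute
\[
\frac{d}{dr}\frac{r^2}{j_{\nu,m}^2 - r^2} = \frac{2rj_{\nu,m}^2}{(j_{\nu,m}^2 - r^2)^2} > 0,
\]
so each summand is strictly increasing on any interval avoiding $r = j_{\nu,m}$, making $\nu - 2\sum$ strictly decreasing on $(0,j_{\nu,1})$ and on each $(j_{\nu,m},j_{\nu,m+1})$. The limit values match the statement by inspection: all summands vanish as $r \to 0^+$, producing $\nu$; as $r \to j_{\nu,m}^-$ the $m$-th summand tends to $+\infty$, producing $-\infty$; and as $r \to j_{\nu,m}^+$ it tends to $-\infty$, producing $+\infty$.

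For monotonicity in $\nu$ at fixed $r$, I would invoke the classical fact (a consequence of Sturm comparison, as in DLMF 10.21(viii)) that $j_{\nu,m}$ is strictly increasing in $\nu$ for $\nu \geq 0$. Then $j_{\nu,m}^2 - r^2$ is strictly increasing in $\nu$, so $r^2/(j_{\nu,m}^2 - r^2)$ is strictly decreasing in $\nu$ on every $\nu$-interval over which the denominator keeps a constant nonzero sign. Combined with the $+\nu$ term in front, this gives strict increase of $rJ_\nu^\prime(r)/J_\nu(r)$ in $\nu$ on each such interval. The singularities of $\nu \mapsto rJ_\nu^\prime(r)/J_\nu(r)$ occur precisely at the $\nu$-values for which some $j_{\nu,m}$ equals $r$, which matches the excluded set in the lemma.

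The main obstacle is the careful bookkeeping at the singularities, both as $r$ approaches a zero of $J_\nu$ and as $\nu$ crosses a value where $J_\nu(r)$ vanishes, to verify that the one diverging summand carries the correct sign at each endpoint and that strict monotonicity propagates across every interval of continuity. Once that is in hand, both parts of the lemma reduce to recognizing that the product representation of $J_\nu$ encodes the two monotonicity properties of the zeros $j_{\nu,m}$: trivial monotonicity in $r$ (each $j_{\nu,m}$ is constant) and the classical monotonicity in $\nu$.
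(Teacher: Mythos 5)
Your proposal is correct and follows essentially the same route as the paper: both take the logarithmic derivative of the Weierstrass/infinite product for $J_\nu$ (the paper's form $\nu + 2\sum_m r^2/(r^2-j_{\nu,m}^2)$ is identical to yours up to a sign rearrangement), establish monotonicity in $r$ term by term, and deduce monotonicity in $\nu$ from the strict increase of the zeros $j_{\nu,m}$ with $\nu$. The only cosmetic difference is that the paper rewrites each summand as $j_{\nu,m}^2/(r^2-j_{\nu,m}^2)+1$ to make the $r$-monotonicity visible without differentiating, whereas you differentiate the summand directly.
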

\begin{proof}
The infinite product expansion of the Bessel function \cite[10.21.15]{DLMF} gives
\begin{align}
r \frac{J_\nu^\prime(r)}{J_\nu(r)}
& = r \frac{d\ }{dr} \log J_\nu(r) \notag \\
& = \nu + 2 \sum_{m=1}^\infty \frac{r^2}{r^2 - j_{\nu,m}^2} \label{logderiv1} \\
& = \nu + 2 \sum_{m=1}^\infty \left( \frac{j_{\nu,m}^2}{r^2 - j_{\nu,m}^2} + 1 \right) . \label{logderiv2}
\end{align}
Each term of the sum is strictly decreasing as a function of $r$ wherever it is defined, by \eqref{logderiv2}, and the sum clearly tends to $-\infty$ as $r \to j_{\nu,m}$ from the left, and tends to $\infty$ as $r \to j_{\nu,m}$ from the right.. 

Further, each zero $j_{\nu,m}$ is strictly increasing as a function of $\nu$, by \cite[10.21(iv)]{DLMF}, and so the expression in \eqref{logderiv1} is strictly increasing as a function of $\nu$, when $r$ is fixed, provided $\nu$ avoids values at which $r=j_{\nu,m}$ for some $m$.

\emph{Aside.} Monotonicity with respect to $\nu$ can also be proved by studying the second derivative of $\log |J_{\nu+1}/J_\nu|$, like we did for modified Bessel functions in the proof of \autoref{modifiedbesselmonot}.
\end{proof}

\end{document}